\definecolor{grey}{rgb}{.7,.7,.7}
\renewcommand{\le}{\leqslant}
\renewcommand{\ge}{\geqslant}
\newcommand{\ptl}{\partial}
\newcommand{\la}{\lambda}
\newcommand{\hh}{{\mathcal{H}}}
\newcommand{\esf}{\mathbb{S}}
\newcommand{\B}{\mathcal{B}}
\newcommand{\norm}[1]{\left\lVert#1\right\rVert}
\newcommand{\vv}{\mathcal{V}}
\newcommand{\pp}{\mathcal{P}}
\newcommand{\nn}{\mathbb{N}}
\newcommand{\Sg}{\Sigma}
\newcommand{\sg}{\sigma}
\newcommand{\vsg}{\varsigma}
\newcommand{\eps}{\varepsilon}
\newcommand{\ga}{\gamma}
\newcommand{\La}{\Lambda}
\newcommand{\de}{\delta}
\newcommand{\be}{\beta}
\newcommand{\LLL}{\mathcal{L}}
\newcommand{\NN}{\mathcal{N}}
\DeclareMathOperator{\apjac}{apJac}
\DeclareMathOperator{\ric}{Ric}
\DeclareMathOperator{\sym}{sym}
\DeclareMathOperator{\spec}{Spec}
\DeclareMathOperator{\reg}{reg}
\newtheorem{theorem}{Theorem}[section]
\newtheorem{proposition}[theorem]{Proposition}
\newtheorem{lemma}[theorem]{Lemma}
\theoremstyle{definition}
\newtheorem{remark}[theorem]{Remark}
\theoremstyle{remark}
\newenvironment{enum}{\begin{enumerate}
}{\end{enumerate}}
\numberwithin{equation}{section}
\begin{document}

\title{Isoperimetric regions in anisotropically scaled product manifolds}
\author{Efstratios Vernadakis}
\address{Department of Mathematics and Statistics, University of Cyprus, 1678 Nicosia, Cyprus}
\email{vernadakis.efstratios@ucy.ac.cy}

\date{\today}
\begin{abstract}
Let $M, N$ be compact Riemannian manifolds.
Then, for fixed volume fraction, in the product of a sufficiently small homothetic copy of $M$ with $N$, every
isoperimetric region is the product of $M$ with an isoperimetric region in $N$, provided the boundaries of the isoperimetric regions in $N$ are smooth.
\end{abstract}

\subjclass[2010]{49Q10, 49Q20, 53C21, 53C42}
\keywords{isoperimetric inequality; isoperimetric regions; product of Riemannian manifolds; symmetrization; stable constant mean curvature hypersurfaces; spectral decomposition; eigenvalues}

\maketitle

%\tableofcontents

\thispagestyle{empty}

\bibliographystyle{amsplain}% \nocite{*}

\section{Introduction}

The classical isoperimetric problem seeks to determine, within a given ambient space, the subsets of prescribed volume that minimize boundary measure.
It is one of the most fundamental variational problems in geometry.

In products of Euclidean and hyperbolic spaces, W.–T. and W.–Y. Hsiang~\cite{hsiang} provided a complete description of isoperimetric hypersurfaces.
In products of a circle with a model space, isoperimetric regions were classified by Pedrosa and Ritoré \cite{peri}.
Morgan~\cite{Morpr} established lower bounds for the isoperimetric profile of a Riemannian product in terms of concave lower bounds for the profiles of the factors.
In the spherical case, Pedrosa~\cite{Pedrosa2004} classified the isoperimetric regions in the spherical cylinder $\mathbb{S}^n\times\mathbb{R}$.
In Riemannian cylinders $M\times\mathbb{R}$, Duzaar and Steffen~\cite{du-st} proved that large-volume minimizers are slabs.
This was later extended to $M\times\mathbb{R}^k$, where large isoperimetric regions are products of the first factor with geodesic balls in the second factor, see~\cite{manstr}.

A natural question was originally proposed by M. Hutchings — as reported by F. Morgan in~\cite{morpol} — who conjectured that under anisotropic scaling, the isoperimetric regions should align with a cylindrical product structure. More concretely, if one rescales one factor of a product, do minimizers eventually acquire a cylindrical structure of the form $M \times S$, where $S$ is an isoperimetric region in the other factor, once the scaling parameter is sufficiently small (or large)?

In this paper, we address this conjecture under the assumption that the isoperimetric hypersurfaces of the second factor \(N\) are of class \(C^{2,\alpha}\), a condition that holds, for example, when \(\dim N\le 8\) or for small volumes (and their complements).

We begin by recalling the scaling identities for Hausdorff measure, volume, and perimeter, and fix notation for anisotropic homotheties on product manifolds.
In the Appendix we adapt the Ros–Morgan symmetrization on horizontal slices, showing that it suffices to treat the case $\mathbb{S}^m\times N$ (Proposition~\ref{prp:reduce})
\,(see also the related symmetrization results of Morgan--Howe--Harman~\cite{mosym}).
We then analyze \emph{cylinderoids} $M_1\times S$ and symmetrized competitors in $\esf^m\times N$: compactness and slice estimates imply that, under the anisotropic deformation, isoperimetric sets subconverge (in both $L^1$ and Hausdorff topologies) to cylinderoids whose projections onto $N$ are isoperimetric (Propositions~\ref{prp:pr} and~\ref{prp:prpc}). Moreover, by standard regularity theory for perimeter minimizers,
all relevant competitors have $C^{2,\alpha}$ boundaries.  
This allows us to restrict the analysis to $C^{2,\alpha}$ graphical 
perturbations of $\Sigma_t$.
Next, exploiting the product spectral splitting and the scaling of eigenvalues, we establish a stability inheritance result: if $\Sg\subset N$ is a stable constant mean curvature hypersurface, then $tM\times\Sg$ is stable for all sufficiently small $t>0$ (Proposition~\ref{Thm:sts}).
Finally, assuming the $C^{2,\alpha}$ regularity of isoperimetric hypersurfaces in $N$, we adapt the Lyapunov–Schmidt reduction framework developed in~\cite{rqi} to the present product setting around $\Sg_t:=tM\times\Sg$. 
This analytic decomposition into a Jacobi–kernel component and its $L^2$–orthogonal complement, combined with stability estimates, yields that all nearby stationary graphs are parametrized by a finite-dimensional family of kernel deformations. This rigidity ultimately forces minimizers close to $\Sg_t$ to be cylinderoids (Theorem~\ref{thm:main}).

%\mbox{}

We begin by recalling the scaling relations for volume and perimeter, and by introducing anisotropic product scalings. We also summarize standard results on existence, regularity, and stability of isoperimetric regions in compact Riemannian manifolds.

Let $(M,g)$ be a Riemannian manifold. For a measurable set $S \subset M$, we denote by 
\[
  \vv_g(S) \quad \text{and} \quad \pp_g(S)
\]
the Riemannian volume and perimeter, respectively. The $\alpha$-dimensional Hausdorff measure with respect to $g$ is written $\hh^\alpha_g(S)$. A direct computation from the definitions yields the following scaling relations:
\begin{equation}\label{eq:est}
  \hh_{t^2 g}^{\alpha}(S) = t^{\alpha}\,\hh_g^{\alpha}(S), 
  \qquad \vv_{t^2 g}(S) = t^m\,\vv_g(S), 
  \qquad \pp_{t^2 g}(S) = t^{m-1}\,\pp_g(S),
\end{equation}
where $m = \dim M$.  
Throughout the manuscript, whenever no ambiguity arises, we shall drop subscripts and superscripts in the notation. We write $tM$ for the Riemannian manifold $(M,t^2 g)$, and refer to it as the \emph{$t$-homothety} of $(M,g)$.

Let $(M_i,g_i)$, $i=1,2$, be Riemannian manifolds, and set $g=g_1 \times g_2$.  
We define two anisotropic product scalings:
\begin{itemize}
  \item The \emph{right anisotropic $t$-homothety} is
  \[
    (M_1 \times M_2, g_1 \times t^2 g_2),
  \]
  denoted by $(M_1 \times M_2, g_+^t)$ or simply $M_1 \times tM_2$.
  \item The \emph{left anisotropic $t$-homothety} is
  \[
    (M_1 \times M_2, t^2 g_1 \times g_2),
  \]
  denoted by $(M_1 \times M_2, g_-^t)$ or simply $tM_1 \times M_2$.
\end{itemize}
These anisotropic deformations play a central role in our analysis, as they geometrically encode the effect of collapsing or expanding one factor of a product manifold. We record the following scaling behavior for volume and perimeter under right anisotropic homotheties.

\begin{lemma}\label{lem:anest}
Let $(M_i,g_i)$ be Riemannian manifolds of dimensions $m_i$ ($i=1,2$), and set $g=g_1\times g_2$. Then the following hold for all measurable $S\subset M_1\times M_2$ and $(m_1+m_2-1)$–rectifiable $\Sigma$:
\begin{enumerate}
\item[(i)] For every set $S \subset M_1 \times M_2$,
\begin{equation}\label{eq:anest1}
  \vv_{g_+^t}(S) = t^{m_2} \,\vv_g(S).
\end{equation}
\item[(ii)] If $\Sg \subset M_1 \times M_2$ is $(m_1+m_2-1)$-rectifiable, then for $t \leq 1$,
\begin{equation}\label{eq:anest2}
  \hh_{g_+^t}^{m_1+m_2-1}(\Sg) 
  \leq t^{m_2-1}\,\hh_g^{m_1+m_2-1}(\Sg).
\end{equation}
\item[(iii)] Equality holds in \eqref{eq:anest2} if and only if the $g$-normal of $\Sg$ is tangent to $M_2$, up to a $\hh_g^{m_1+m_2-1}$-null set.
\end{enumerate}
\end{lemma}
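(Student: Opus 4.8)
\emph{Plan of proof.} Part (i) is a direct consequence of the scaling of the Riemannian volume density: in any product coordinate chart $\sqrt{\det g_+^t}=\sqrt{\det g_1}\,\sqrt{\det(t^2 g_2)}=t^{m_2}\sqrt{\det g}$, so $dV_{g_+^t}=t^{m_2}\,dV_g$ pointwise, and integrating over $S$ gives \eqref{eq:anest1}. The content of the lemma is therefore entirely in the area comparison (ii)--(iii), which I would prove by a pointwise computation on the approximate tangent planes of $\Sg$ followed by an integration.

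Write $n:=m_1+m_2$. Since $\Sg$ is $(n-1)$-rectifiable, for $\hh^{n-1}_g$-almost every $p\in\Sg$ the approximate tangent space $W:=T_p\Sg\subset T_pM_1\oplus T_pM_2$ exists, and $p\mapsto W$ is measurable. The identity map from $(M_1\times M_2,g)$ to $(M_1\times M_2,g_+^t)$ is smooth and locally bi-Lipschitz, so the area formula applied to its restriction to $\Sg$ gives the absolute continuity $d\hh^{n-1}_{g_+^t}=\theta_t\,d\hh^{n-1}_g$ on $\Sg$, where the density $\theta_t(p)$ is the ratio of the $(n-1)$-dimensional volumes that $g_+^t$ and $g$ assign to the subspace $W$; equivalently $\theta_t(p)=\big(\det[\,g_+^t(e_i,e_j)\,]_{i,j}\big)^{1/2}$ for any $g$-orthonormal basis $e_1,\dots,e_{n-1}$ of $W$, the right-hand side being independent of the chosen basis.

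The main step is to evaluate $\theta_t(p)$ in a frame adapted to the product splitting. Let $\nu=\nu_1+\nu_2$ be a $g$-unit normal to $W$, with $\nu_i\in T_pM_i$, and set $a:=|\nu_1|_g$ and $b:=|\nu_2|_g$, so $a^2+b^2=1$. Assume first $0<a<1$; complete $f_1:=\nu_1/a$ and $h_1:=\nu_2/b$ to $g_1$- and $g_2$-orthonormal bases $f_1,\dots,f_{m_1}$ of $T_pM_1$ and $h_1,\dots,h_{m_2}$ of $T_pM_2$. Setting $w:=b f_1-a h_1$, one checks that $\{f_2,\dots,f_{m_1},\,w,\,h_2,\dots,h_{m_2}\}$ is a $g$-orthonormal basis of $W=\nu^{\perp}$, and that in this basis the Gram matrix of $g_+^t$ is block diagonal with blocks $\mathrm{Id}_{m_1-1}$, the scalar $g_+^t(w,w)=b^2+a^2t^2$, and $t^2\,\mathrm{Id}_{m_2-1}$. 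Hence $\theta_t(p)=\big(t^{2(m_2-1)}(a^2t^2+b^2)\big)^{1/2}=t^{m_2-1}\sqrt{a^2t^2+b^2}$. The degenerate cases $\nu_1=0$ and $\nu_2=0$ are handled the same way (dropping $f_1,w$, respectively $h_1,w$, from the frame) and give $\theta_t(p)=t^{m_2-1}$ and $\theta_t(p)=t^{m_2}$, consistent with the same formula.

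Finally, for $t\le1$ we have $a^2t^2+b^2\le a^2+b^2=1$, hence $\theta_t(p)\le t^{m_2-1}$ for a.e.\ $p\in\Sg$; integrating over $\Sg$ yields \eqref{eq:anest2}. Equality in \eqref{eq:anest2} forces $\theta_t(p)=t^{m_2-1}$, i.e.\ $a(p)^2t^2+b(p)^2=1$, for $\hh^{n-1}_g$-a.e.\ $p$; when $t<1$ this is equivalent to $a(p)=0$, i.e.\ $\nu(p)=\nu_2(p)$ being tangent to $M_2$, which is exactly (iii). The nearest thing to an obstacle is the (standard) justification of the density relation $d\hh^{n-1}_{g_+^t}=\theta_t\,d\hh^{n-1}_g$ for a merely rectifiable, not smooth, $\Sg$, together with the uniform bookkeeping of possibly horizontal or vertical normal directions; both are handled transparently by the adapted orthonormal frame above, and no serious difficulty arises.
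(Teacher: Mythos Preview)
Your proposal is correct and follows essentially the same approach as the paper: both compute the Jacobian of the identity map $(M_1\times M_2,g)\to(M_1\times M_2,g_+^t)$ restricted to $\Sg$ by decomposing the unit normal as $\nu=a\nu_1+b\nu_2$ and obtaining the density $t^{m_2-1}\sqrt{a^2t^2+b^2}$. You supply more detail on the adapted frame and the Gram matrix, but the argument is the same.
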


\begin{proof}
Part (i) follows directly from computing the Jacobian of the identity map from $(M_1 \times M_2,g)$ to $(M_1 \times M_2,g_+^t)$.  
For (ii), at a regular point of $\Sg$ the $g$-unit exterior normal vector $\nu$ can be decomposed as $\nu = a\nu_1 + b\nu_2$ so that $a^2+b^2=1$ and $\nu_1$, $\nu_2$ unit vectors tangent to $M_1$ and $M_2$, respectively. The Jacobian of the identity map from $(M_1 \times M_2,g)$ to $(M_1 \times M_2,g_+^t)$ restricted to $\Sg$ then equals $t^{m_2-1}(t^2a^2+b^2)^{1/2}$. This yields inequality \eqref{eq:anest2}.  
Assertion (iii) follows from the fact that $\nu$ is tangent to $M_2$ precisely when $a=0$ almost everywhere.  

\end{proof}
The following local isoperimetric inequalities are classical, see Duzaar and Steffen \cite{du-st}.
\begin{lemma}
\label{lem:ine}
Let $M$ be a compact $m$-dimensional Riemannian manifold. Given $0<v_0<\hh^m(M)$, there exist positive constants  $\ga_1(M,v_0),\ga_2(M,v_0)$ such that for any set $S\subset M$ with $(m-1)$-rectifiable boundary and $0<\hh^m(S)<v_0$ the following isoperimetric inequalities hold
\begin{equation}
\label{eq:Ine1}
\hh^{m-1}(\ptl S)\ge \ga_1(M,v_0)\,\hh^m(S)
\end{equation}
and
\begin{equation}
\label{eq:Ine2}
\hh^{m-1}(\ptl S)\ge \ga_2 (M,v_0)\,\hh^m(S)^{(m-1)/m},
\end{equation}
\end{lemma}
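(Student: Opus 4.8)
The plan is to get \eqref{eq:Ine1} from \eqref{eq:Ine2} for free and to prove \eqref{eq:Ine2} by treating small volumes and the rest separately. First, since $\ptl S$ is $(m-1)$-rectifiable with $\hh^{m-1}(\ptl S)<\infty$, the set $S$ has finite perimeter and $\hh^{m-1}(\ptl S)\ge\pp(S)$ (the essential boundary lies inside the topological one), so it is enough to bound $\pp(S)$ from below. For the reduction, note that if $0<\hh^m(S)<v_0$ then $\hh^m(S)^{(m-1)/m}=\hh^m(S)\cdot\hh^m(S)^{-1/m}\ge v_0^{-1/m}\,\hh^m(S)$, so any $\ga_2$ valid in \eqref{eq:Ine2} yields \eqref{eq:Ine1} with $\ga_1:=v_0^{-1/m}\ga_2$; hence only \eqref{eq:Ine2} needs proof.

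For small volumes I would use a uniform local Euclidean comparison. By compactness of $M$ and continuity of $g$, there are $r_0>0$, $c_0>0$ and a fraction $\te\in(0,1)$, all independent of the centre, so that every geodesic ball $B_p:=B(p,r_0)$ satisfies the relative isoperimetric inequality $\pp(E;B_p)\ge c_0\,\hh^m(E\cap B_p)^{(m-1)/m}$ whenever $\hh^m(E\cap B_p)\le\te\,\hh^m(B_p)$ --- this is obtained by transporting the Euclidean relative isoperimetric inequality on a ball through normal coordinates, whose metric distortion is controlled uniformly. Fixing a finite subcover $M=\bigcup_{j=1}^k B_{p_j}$ and $\be:=\min_j\hh^m(B_{p_j})>0$, the key observation is that if $\hh^m(S)<\te\be$ then $\hh^m(S\cap B_{p_j})\le\hh^m(S)<\te\,\hh^m(B_{p_j})$ for \emph{every} $j$, so the local inequality applies on each chart ball with no chaining across overlaps; picking $j^*$ with $\hh^m(S\cap B_{p_{j^*}})\ge k^{-1}\hh^m(S)$ gives $\pp(S)\ge\pp(S;B_{p_{j^*}})\ge c_0k^{-(m-1)/m}\,\hh^m(S)^{(m-1)/m}$.

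For the remaining range $\te\be\le\hh^m(S)\le v_0$ I would argue by soft compactness: it suffices that $\de_0:=\inf\{\pp(E):\te\be\le\hh^m(E)\le v_0\}>0$, since then $\pp(S)\ge\de_0\ge\big(\de_0 v_0^{-(m-1)/m}\big)\hh^m(S)^{(m-1)/m}$. Were $\de_0=0$, a minimizing sequence $E_i$ would have $\chi_{E_i}$ bounded in $BV(M)$, hence (Rellich) an $L^1$-convergent subsequence with limit $\chi_{E_\infty}$, $\hh^m(E_\infty)\in[\te\be,v_0]$, while lower semicontinuity of perimeter forces $\pp(E_\infty)=0$; by connectedness of $M$ this makes $E_\infty$ equal to $\emptyset$ or $M$ up to a null set, contradicting $0<\hh^m(E_\infty)<\hh^m(M)$. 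Setting $\ga_2:=\min\{c_0k^{-(m-1)/m},\ \de_0 v_0^{-(m-1)/m}\}$ then proves \eqref{eq:Ine2}, and $\ga_1:=v_0^{-1/m}\ga_2$ proves \eqref{eq:Ine1}.

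The hard part is the \emph{uniform} local comparison of the second paragraph: one must fix $r_0$ small enough that normal coordinates around every point of $M$ distort volumes and $(m-1)$-areas by a factor bounded independently of the base point, and then keep track of how that distortion degrades the Euclidean isoperimetric constant and the admissible fraction $\te$. Once that is in place the rest is routine --- the only genuinely useful point being that a set of volume below $\te\be$ automatically occupies at most a $\te$-fraction of \emph{each} chart ball, which is precisely what lets the local inequalities be combined without any connectedness argument across overlapping balls.
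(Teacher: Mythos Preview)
The paper does not supply its own proof of this lemma: it simply records the statement as classical and cites Duzaar--Steffen \cite{du-st}. So there is nothing to compare against at the level of argument.

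Your proof is correct and follows a standard route. The reduction of \eqref{eq:Ine1} to \eqref{eq:Ine2} via $\hh^m(S)^{(m-1)/m}\ge v_0^{-1/m}\hh^m(S)$ is clean. The split into small volumes (handled by a uniform local Euclidean comparison in normal coordinates plus a pigeonhole over a finite cover) and volumes bounded away from zero (handled by BV compactness and lower semicontinuity of perimeter) is exactly how such statements are usually proved. Two minor remarks: first, the case $\hh^{m-1}(\ptl S)=\infty$ makes both inequalities vacuous, so your opening assumption of finiteness is harmless; second, in the compactness step the passage $\hh^m(E_i)\to\hh^m(E_\infty)$ under $L^1$ convergence is what pins $\hh^m(E_\infty)\in[\te\be,v_0]\subset(0,\hh^m(M))$, and you use this implicitly---it might be worth one explicit sentence. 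Otherwise the argument is complete.
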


The \emph{isoperimetric profile} of a Riemannian manifold $(M,g)$ is defined by
\[
  I_g(v) = \inf \big\{ \pp_g(E) : \vv_g(E) = v \big\}.
\]
A set $E \subset M$ is called an \emph{isoperimetric region} if $\pp_g(E) = I_g(\vv_g(E))$. Existence of isoperimetric regions in compact manifolds is guaranteed by standard compactness arguments in geometric measure theory, see \cite{Maggi}. Moreover, classical regularity theory implies that the boundary of an isoperimetric
region is a smooth hypersurface except for a singular set of Hausdorff codimension at least $8$,  as proved by Morgan \cite{moreg} and by Gonzales–Massari–Tamanini \cite{gomato}. In Chapter 13.2 of \cite{Morgan2016}, Morgan, following methods of Smale \cite{Smale1999},  describes constructions of irregular isoperimetric hypersurfaces in manifolds of dimension greater than seven. Furthermore, Morgan–Johnson \cite{morjohn} and Nardulli \cite{nard} show that, for small volumes (and their complements), isoperimetric hypersurfaces are smooth.

Isoperimetric regions are considered up to sets of measure zero. 
In particular, every isoperimetric set is equivalent to its closure, and its perimeter can be computed on the topological boundary:
\begin{equation}\label{eq:ww4}
\pp(E) = \pp(\overline{E}) = \hh^{m-1}(\ptl E).
\end{equation}

Let $\Sg\subset M$ be an isoperimetric hypersurface. Standard first and second variation computations
(see for instance Barbosa--do Carmo--Eschenburg~\cite{bd})
imply that the regular part $\reg(\Sg)$ has constant mean curvature and satisfies a stability inequality.
Specifically, for every smooth compactly supported function $u:\reg(\Sg)\to\mathbb{R}$ with zero mean one has
\begin{equation}\label{eq:stab}
- \int_\Sg u \,\big(\Delta u + \ric(\nu,\nu)u + |\sg|^2 u\big) \,\geq 0,
\end{equation}

where $\nu$ is the exterior unit normal to $\Sg$, $\Delta$ is the Laplace operator on $\Sg$, $|\sg|^2$ is the squared norm of the second fundamental form, and $\ric$ denotes the Ricci tensor of the ambient manifold $M$. 
We denote by $J_\Sg$ the Jacobi operator, which acts on smooth functions $u$ as follows:
\[
  J_\Sg u = \Delta u + \ric(\nu,\nu)\,u + |\sg|^2 u.
\]
If the inequality in \eqref{eq:stab} is strict for all nontrivial $u$, we say that $\sg$ is \emph{strictly stable}.

\section{Cylinderoids} 

In this section, we analyze \emph{cylinderoids}, that is, sets of the form $M_1\times S$, or equivalently $E=M_1\times \pi_2(E)$ up to sets of measure zero, and their role in the isoperimetric problem.
Using the reduction proved in the Appendix, any instance relevant to our main theorem can be placed, up to volume-perimeter preserving identifications, inside a product $\esf^m\times N$.
This reduction justifies working with spherically \emph{symmetrized} sets, since replacing each horizontal slice by a concentric geodesic ball in $\esf^m$ preserves volume and decreases perimeter.

We begin by collecting several elementary lemmas on convergence and measure-theoretic properties of cylinderoids and symmetrized subsets of $\esf^m\times N$. 
These results will be repeatedly used in the proof of Proposition~\ref{prp:pr}.

\begin{lemma}
\label{lem:hl}
\mbox{}
Let $M$ be a compact Riemannian manifold and $\{E_i\}_{i\in\nn}$ be a sequence of compact sets converging to $E_H$ in the Hausdorff topology and to $E_L$ in the $L^1$ topology. Then $E_L\subset E_H\,\, a.e.$
\end{lemma}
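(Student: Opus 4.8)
The plan is to exploit the compatibility between Hausdorff convergence and $L^1$ convergence of sets. Recall that $E_i \to E_H$ in the Hausdorff topology means that for every $\eps > 0$ there is $i_0$ such that for $i \ge i_0$ one has $E_i \subset (E_H)_\eps$, the open $\eps$-neighbourhood of $E_H$; in particular, since $E_H$ is compact, $\bigcap_{\eps > 0} (E_H)_\eps = E_H$. Meanwhile $E_i \to E_L$ in $L^1$ means $\vv(E_i \triangle E_L) \to 0$, so a subsequence of $\mathbf{1}_{E_i}$ converges to $\mathbf{1}_{E_L}$ pointwise $\vv$-a.e.

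First I would fix $\eps > 0$ and use Hausdorff convergence to obtain $i_0 = i_0(\eps)$ with $E_i \subset (E_H)_\eps$ for all $i \ge i_0$; hence $\mathbf{1}_{E_i} \le \mathbf{1}_{(E_H)_\eps}$ for such $i$. Passing to the a.e.-convergent subsequence of characteristic functions furnished by $L^1$ convergence, we get $\mathbf{1}_{E_L} \le \mathbf{1}_{(E_H)_\eps}$ $\vv$-a.e., i.e. $E_L \subset (E_H)_\eps$ up to a $\vv$-null set. Then I would let $\eps \downarrow 0$ along a countable sequence $\eps_k \to 0$: the union of the corresponding null sets is still null, and outside it $E_L \subset \bigcap_k (E_H)_{\eps_k} = E_H$ (using compactness of $E_H$ to identify the intersection of the closed neighbourhoods with $E_H$ itself). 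This gives $E_L \subset E_H$ a.e., as claimed.

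The only mildly delicate point is bookkeeping the null sets: $L^1$ convergence gives pointwise a.e. convergence only after passing to a subsequence, but since the Hausdorff limit $E_H$ and the $L^1$ limit $E_L$ do not depend on which subsequence we pass to, the conclusion for the subsequence is a conclusion for $E_L$ and $E_H$ themselves. One then takes a countable exhaustion $\eps_k \downarrow 0$ so that only countably many null sets are discarded. I expect this to be entirely routine; there is no real obstacle, the statement being essentially a compatibility lemma between the two notions of convergence that will be invoked later when comparing Hausdorff and $L^1$ limits of minimizing sequences.
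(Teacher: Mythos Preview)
Your proposal is correct and follows essentially the same approach as the paper: use Hausdorff convergence to trap $E_i$ in an $\eps$-neighbourhood of $E_H$, pass to an a.e.-convergent subsequence from $L^1$ convergence to deduce $E_L\subset (E_H)_\eps$ a.e., then intersect over $\eps\downarrow 0$ using compactness of $E_H$. The only cosmetic difference is that the paper extracts a single a.e.-convergent subsequence (hence a single null set $O$) at the outset and then runs the inclusion for every $r>0$ with that same $O$, so no countable union of null sets is needed; your version with a countable sequence $\eps_k$ and a union of null sets is harmless overkill.
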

\begin{proof}
Fix $r>0$. Since $E_i\to E_H$ in the Hausdorff topology, 
\begin{equation}
\label{eq:hl0}
E_i\subset [E_H]_{r}\quad \text{for sufficiently large}\,\, i\in\nn.
\end{equation}
Where $[E_H]_{r}$ denotes the closed $r$–neighbourhood of $E_H$. Since $\chi_{E_i}\to \chi_{E_L}$ in the $L^1$ topology, then $\chi_{E_i}\to \chi_{E_L}$ a.e, for a non-relabeling subsequence. In other words  there exists a null set $O\subset M$ so that $\chi_{E_i}\to \chi_{E_L}$ pointwise in $M\setminus O$. Let $p\in E_L\setminus O$ then, as $\chi_{E_i}(p)\to \chi_{E_L}(p)$, we get $p\in E_i$ for sufficiently large $i\in\nn$. Which, by \eqref{eq:hl0}, yields $p\in [E_H]_{r}$. As $r>0$ was arbitrary we have
\begin{equation}
\label{eq:hl1}
E_L\setminus O\subset [E_H]_{r}\quad \text{for every}\quad r>0.
\end{equation}
Since $E_H$ is compact, as a Hausdorff limit of such sets , we obtain
\begin{equation}
\label{eq:hl2}
E_L\setminus O\subset \bigcap_{r>0}[E_H]_{r}=E_H.
\end{equation}
This concludes the proof.
\end{proof}

Let $M_i$ be compact  Riemannian manifolds, $E\subset M_1\times M_2$, $(x,y)\in M_1\times M_2$ we define 
\begin{equation}
\label{eq:defy}
(E)^{x}=\pi_1^{-1}(x)\cap E \quad\text{and}\quad (E)_{y}=\pi_2^{-1}(y)\cap E
\end{equation}
and
\begin{equation}
\label{eq:defew}
E{\{w\}}=\big\{(x,y)\in E:x\in M_1, y\in M_2\,\,\text{and}\,\,0<\vv_{g_1}((E)_{y})\le w\big\}.
\end{equation}

We also define the slice–volume map $\Phi_E:M_2\to [0,\vv(M_1)]$ by
\begin{equation}\label{eq:phie}
\Phi_E(y)=
\begin{cases}
\vv_{g_1}\big((E)_y\big), & \text{if } (E)_y\neq \emptyset,\\
0, & \text{if } (E)_y=\emptyset.
\end{cases}
\end{equation}
Consequently,
\begin{equation}\label{eq:cophi}
\vv(E)=\int_{M_2}\vv_{g_1}\big((E)_y\big)\,d\vv_{g_2}
=\int_{M_2}\Phi_E(y)\,d\vv_{g_2}.
\end{equation}

In what follows, $\esf$ denotes a round sphere.

\begin{lemma}
\label{lem:h=l}
\mbox{}
Let $N$ be a compact Riemannian manifold and $\{E_i\}_{i\in\nn}\subset \esf\times N$ be a sequence of compact symmetrized sets converging to $E_H$ in the Hausdorff topology and to $E_L$ in the $L^1$ topology. Then $E_L= E_H,\,\, a.e.$
\end{lemma}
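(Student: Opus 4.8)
The plan is to upgrade the inclusion $E_L \subset E_H$ from Lemma~\ref{lem:hl} to an equality by showing the reverse inclusion $E_H \subset E_L$ up to a null set, using crucially that the sets $E_i$ are \emph{symmetrized}: each horizontal slice $(E_i)_y = \pi_2^{-1}(y)\cap E_i$ is a closed geodesic ball in $\esf$ centered at the fixed pole (possibly empty, a point, or all of $\esf$). First I would fix a point $(p,q)\in E_H$ and, since $E_H$ is the Hausdorff limit of the $E_i$, pick points $(p_i,q_i)\in E_i$ with $(p_i,q_i)\to(p,q)$. The key observation is that because $(E_i)_{q_i}$ is a geodesic ball containing $p_i$, it contains the entire ball $B(o, d(o,p_i))$ around the pole $o$ of radius $d(o,p_i)$; so the slice-volume $\Phi_{E_i}(q_i) = \vv_\esf((E_i)_{q_i})$ is bounded below by the volume of that ball, which converges to $\vv_\esf(B(o,d(o,p)))$, a quantity that is strictly positive whenever $p\neq o$ (the case $p=o$ being trivial since the pole lies in $\overline{E_L}$ as soon as any slice is nonempty, and can be handled by a limiting/density argument or simply noted to be $\hh$-null in $\esf$ if isolated).

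Next I would transfer this slice-volume lower bound from the varying base points $q_i$ to a neighborhood of $q$ in $N$. Since $q_i\to q$ and the slice-volume functions behave controllably, I would argue that for every $\eta>0$ there is $\rho>0$ such that $\liminf_i \Phi_{E_i}(y) \geq \vv_\esf(B(o,d(o,p))) - \eta$ for all $y$ in the ball $B_N(q,\rho)$ — this uses that a ball in $\esf$ centered at the pole, once it contains a point near $p$, has volume close to $\vv_\esf(B(o,d(o,p)))$, together with the fact that $E_i$ being compact and symmetrized means nearby slices can only shrink by a controlled amount, or alternatively by passing to the Hausdorff limit directly and noting $(p',q)\in E_H$ for all $p'$ on the geodesic from $o$ to $p$. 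Integrating over $B_N(q,\rho)$ and using \eqref{eq:cophi} together with $L^1$ convergence $\vv(E_i)\to\vv(E_L)$, more precisely $\int_{B_N(q,\rho)}\Phi_{E_i}\,d\vv_{g_2} \to \int_{B_N(q,\rho)}\Phi_{E_L}\,d\vv_{g_2}$, I get that $\Phi_{E_L}$ has positive integral on every such ball, hence (since $E_L\subset E_H$ already forces the slices of $E_L$ to be essentially balls in the limit, or by a Lebesgue-point argument) the point $q$ lies in the essential projection of $E_L$ and in fact $(p,q)\in E_L$ a.e.

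A cleaner route, which I would actually follow, is to work at the level of the slice-volume maps: show that $\Phi_{E_i}\to\Phi_{E_H}$ and $\Phi_{E_i}\to\Phi_{E_L}$ in appropriate senses (the former from Hausdorff convergence of symmetrized sets, where the monotone ball structure makes $\Phi_{E_H}(y)=\vv_\esf\big(\bigcap_r [\,(E_H)_y\,]$\,-type upper semicontinuity arguments work; the latter from $L^1$ convergence via Fubini), conclude $\Phi_{E_H}=\Phi_{E_L}$ almost everywhere on $N$, and then recover $E_H=E_L$ a.e.\ from the fact that a symmetrized set is determined a.e.\ by its slice-volume map (each slice being the centered ball of the prescribed volume). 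Combined with $E_L\subset E_H$ a.e.\ from Lemma~\ref{lem:hl}, equality of volumes on all of $\esf\times N$ — indeed $\vv(E_H)=\lim\vv(E_i)=\vv(E_L)$ since Hausdorff convergence of these sets also forces $L^1$ convergence here by the sandwiching — gives $E_L=E_H$ a.e.

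The main obstacle I anticipate is the potential upper semicontinuity defect in the Hausdorff limit: a priori the Hausdorff limit $E_H$ could be strictly larger in measure than the $L^1$ limit (this is exactly the generic failure that Lemma~\ref{lem:hl} only resolves as an inclusion), and the whole point is that symmetrization rules this out. Making the ``nearby slices cannot suddenly gain volume'' step rigorous — i.e.\ controlling $\Phi_{E_i}(y)$ for $y$ near $q$ from the single constraint $(p_i,q_i)\in E_i$ with $q_i\to q$ — requires care, because without symmetry a slice at $q_i$ being large says nothing about the slice at a nearby $q$. Here one must use that the $E_i$ are compact and that the \emph{Hausdorff} limit already guarantees $(p',q)\in E_H$ for every $p'$ in the pole-centered ball through $p$, so the argument should be organized to first identify the slices of $E_H$ as centered balls (using Hausdorff convergence of symmetrized sets), then match slice volumes with $E_L$ via Fubini, rather than trying to push base points around.
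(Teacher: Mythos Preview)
Your ``cleaner route'' eventually lands on the paper's argument, but only at the very end of the third paragraph, and the surrounding material obscures it. The paper's proof is short: from Lemma~\ref{lem:hl} one has $E_L\subset E_H$ a.e., so it suffices to show $\vv(E_H)=\vv(E_L)$. Since $\vv(E_i)\to\vv(E_L)$ is immediate from $L^1$ convergence, the only work is $\vv(E_i)\to\vv(E_H)$. For this the paper observes that $(E_i)_y\to(E_H)_y$ in the Hausdorff topology for every $y\in N$, that these slices are concentric geodesic balls so Hausdorff convergence forces $\Phi_{E_i}(y)\to\Phi_{E_H}(y)$ pointwise, and then dominated convergence (the $\Phi_{E_i}$ are bounded by $\vv(\esf)$) gives $\int_N\Phi_{E_i}\to\int_N\Phi_{E_H}$, i.e.\ $\vv(E_i)\to\vv(E_H)$.

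Your first two paragraphs are detours you rightly abandon; the attempt to transfer a slice-volume lower bound from $q_i$ to a neighbourhood of $q$ runs into exactly the obstacle you name in the last paragraph and is never needed. In the third paragraph, the plan to show $\Phi_{E_i}\to\Phi_{E_L}$ ``from $L^1$ convergence via Fubini'' is a misstep: $L^1$ convergence of $E_i\to E_L$ yields only $\int_N\Phi_{E_i}\to\int_N\Phi_{E_L}$, not pointwise convergence of the slice-volume maps, and trying to match $\Phi_{E_H}$ with $\Phi_{E_L}$ directly is unnecessary. Once you have $\vv(E_H)=\lim\vv(E_i)=\vv(E_L)$, the inclusion $E_L\subset E_H$ a.e.\ forces $E_L=E_H$ a.e.\ immediately; there is no need to reconstruct the sets from their slice-volume maps. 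Strip the proposal down to: Hausdorff convergence of concentric-ball slices $\Rightarrow$ pointwise convergence of $\Phi_{E_i}$ to $\Phi_{E_H}$ $\Rightarrow$ dominated convergence $\Rightarrow$ volume equality $\Rightarrow$ done.
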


\begin{proof}
Since $E_i\to E_L$ in the $L^1$ topology, we obtain.
\begin{equation}
\label{eq:h=l1}
\vv(E_i)\to \vv(E_L)
\end{equation}
As $E_L\subset E_H\,\, a.e$ by Lemma \ref{lem:hl}, it suffices to show that $\vv(E_H)= \vv(E_L)$. Since $E_i$ are compact and $E_i\to E_H$ in the Hausdorff topology, then $E_H$ is compact. Since $E_i\to E_H$ in the Hausdorff topology in $\esf\times N$ and 
$\esf\times\{y\}$ is closed for each fixed $y\in N$ we obtain
\begin{equation}
\label{eq:h=l2}
(E_i)_y\to (E_H)_y,\,\text{in the Hausdorff  topology for every}\, y\in N.
\end{equation}
By assumption $E_i$ are symmetrized hence $(E_i)_y$ are concentric geodesic balls in $\esf\times \{y\}$. As a consequence
\begin{equation}
\label{eq:h=l3}
\vv_{g_1}((E_i)_y)\to \vv_{g_1}((E_H)_y)\quad \text{for every}\quad y\in N,
\end{equation}
By definition of $\Phi$ we get that $\Phi_{E_i}\to \Phi_{E_H}$ pointwise. Observe that $\{\Phi_{E_i}\}_{i\in\nn}$ is bounded by $\vv(\esf)$. Consequently, by the dominated convergence theorem, we have that
\begin{equation}
\label{eq:h=l4}
\int_N\Phi_{E_i}d\vv_{g_2}\to \int_N\Phi_{E_H}d\vv_{g_2}.
\end{equation}
Finally, thanks to \eqref{eq:h=l1}, \eqref{eq:h=l4} and \eqref{eq:cophi}
 we obtain
\begin{equation}
\label{eq:h=l5}
\vv(E_L)=\lim\vv(E_i)=\lim\int_N\Phi_{E_i}d\vv_{g_2}= \int_N\Phi_{E_H}d\vv_{g_2}=\vv(E_H).
\end{equation}
This concludes the proof.
\end{proof}

\begin{lemma}
\label{lem:ww}
\mbox{}
Let $N$ be a compact Riemannian manifold and $\{E_i\}_{i\in\nn}\subset \esf\times N$ be a sequence of compact symmetrized sets converging to $E$ in the Hausdorff topology. If $E$ is not a cylinderoid  then there exist $\de>0$ and a $0<w<\vv(\esf)$ so that $\vv(E_i{\{w\}})>\de$ for sufficiently large $i\in\nn$. 
\end{lemma}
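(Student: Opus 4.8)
The plan is to argue by contrapositive: assuming that for every $\de>0$ and every $0<w<\vv(\esf)$ the slice-volume condition $\vv(E_i\{w\})>\de$ fails along a subsequence, I would deduce that $E$ is a cylinderoid. The geometric picture is that $E_i\{w\}$ captures exactly the portion of $E_i$ lying over those $y\in N$ whose horizontal slice $(E_i)_y$ is a small (but nonempty) geodesic ball in $\esf$; if this portion has vanishing volume, then in the limit almost every nonempty slice must be a ``full'' slice, i.e.\ $(E)_y$ is either empty or all of $\esf$, which is precisely the statement that $E$ is a cylinderoid.

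First I would set up the slice machinery already introduced: since the $E_i$ are compact symmetrized sets, each slice $(E_i)_y$ is a concentric geodesic ball $B(p,\rho_i(y))\subset\esf$ (with $p$ the fixed center), so $\Phi_{E_i}(y)$ is a monotone function of the radius $\rho_i(y)$, and there is a fixed threshold $w_0(\rho)$ relating ball-radius to ball-volume. Using Hausdorff convergence $E_i\to E$ and the closedness of each $\esf\times\{y\}$, I would pass to slicewise Hausdorff convergence $(E_i)_y\to (E)_y$ for every $y$, exactly as in \eqref{eq:h=l2}, whence $(E)_y$ is again a concentric geodesic ball (possibly empty, a point, or all of $\esf$) and $\Phi_{E_i}(y)\to\Phi_E(y)$ pointwise by \eqref{eq:h=l3}. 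Now suppose $E$ is not a cylinderoid: then the set $A=\{y\in N: 0<\Phi_E(y)<\vv(\esf)\}$ has positive $\vv_{g_2}$-measure. On $A$ the limiting slice is a proper nonempty ball, so choosing $w$ with $0<w<\vv(\esf)$ appropriately — and using that $A=\bigcup_{k} \{y: 1/k \le \Phi_E(y)\le \vv(\esf)-1/k\}$, so some $\{y: \tfrac1k\le \Phi_E(y)\le w\}$ with $w<\vv(\esf)$ already has positive measure, call it $A_w$ — I get a set of positive measure $\be:=\vv_{g_2}(A_w)>0$ on which $\Phi_E(y)\in[1/k,w]$.

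Next, the key step: for $y\in A_w$ the pointwise convergence $\Phi_{E_i}(y)\to\Phi_E(y)\in[1/k,w]$ forces, for $i$ large (depending on $y$), both $\Phi_{E_i}(y)>0$, i.e.\ $(E_i)_y\neq\emptyset$, and $\Phi_{E_i}(y)\le w$, so that the entire slice $(E_i)_y$ lies in $E_i\{w\}$ by definition \eqref{eq:defew}. To make this uniform I would invoke Egorov's theorem: $\Phi_{E_i}\to\Phi_E$ pointwise on a set of finite measure, so there is $A_w'\subset A_w$ with $\vv_{g_2}(A_w')\ge \be/2$ on which the convergence is uniform; hence for all large $i$ and all $y\in A_w'$ one has $\tfrac{1}{2k}\le \Phi_{E_i}(y)\le w$, so $A_w'\subset \{y:\Phi_{E_i}(y)\in(0,w]\}$ and therefore, integrating the slice volumes as in \eqref{eq:cophi},
\begin{equation}
\vv\big(E_i\{w\}\big)=\int_{\{y:\,0<\Phi_{E_i}(y)\le w\}}\Phi_{E_i}(y)\,d\vv_{g_2}\ \ge\ \int_{A_w'}\Phi_{E_i}(y)\,d\vv_{g_2}\ \ge\ \frac{1}{2k}\cdot\frac{\be}{2}=:\de>0
\end{equation}
for all sufficiently large $i$, which is exactly the conclusion with this $\de$ and this $w$.

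The main obstacle I anticipate is the uniformity issue just addressed: pointwise convergence $\Phi_{E_i}\to\Phi_E$ alone does not guarantee that a single $w$ and a single $\de$ work for all large $i$ simultaneously, because the ``time'' after which $(E_i)_y$ enters $E_i\{w\}$ depends on $y$. Egorov's theorem on the finite-measure space $(N,\vv_{g_2})$ resolves this cleanly; one could alternatively bypass Egorov by a Fatou/reverse-Fatou argument on $\int \chi_{\{0<\Phi_{E_i}\le w\}}\Phi_{E_i}$, but one must be slightly careful since the integrand is a product of an indicator (which is not continuous in the limit at $y$ where $\Phi_E(y)=w$ exactly) and $\Phi_{E_i}$; choosing $w$ to avoid the at-most-countably-many level values carrying positive measure for $\Phi_E$, or simply working on the closed sub-level $[1/k,w]$ as above, sidesteps this. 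A minor secondary point is confirming that slicewise Hausdorff limits of concentric balls are concentric balls (including the degenerate cases $\emptyset$, a point, and all of $\esf$), which is immediate from the monotone behavior of the radii, but should be stated so that $(E)_y$ being a proper ball on a positive-measure set genuinely translates into the non-cylinderoid hypothesis.
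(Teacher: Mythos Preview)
Your proof is correct and follows essentially the same route as the paper: identify a positive-measure set in $N$ on which $\Phi_E$ lies strictly between $0$ and $\vv(\esf)$, use slicewise Hausdorff convergence to get $\Phi_{E_i}\to\Phi_E$ pointwise, upgrade via Egorov to uniform convergence on a positive-measure subset, and integrate. The only cosmetic adjustment is to take the final threshold slightly larger than the $w$ bounding $\Phi_E$ (the paper sets $w=w_1+\eps$), so that uniform $\eps$-closeness of $\Phi_{E_i}$ to $\Phi_E\le w_1$ genuinely yields $\Phi_{E_i}\le w$; as written, your claim ``$\Phi_{E_i}(y)\le w$'' on $A_w'$ need not hold when $\Phi_E(y)=w$.
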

\begin{proof}
Since $E$ is not of the form $ \esf\times \pi_2(E)$ there exist $A_0\subset N$ with $\vv_N(A_0)>0$ and $0<w_0\le w_1<\vv(\esf)$ such that 
\begin{equation}
\label{eq:ww1}
w_0 \le \vv_{ g_1}((E)_y)\le w_1 \quad \text{for every}\quad y\in A_0.
\end{equation}
Where by the definition of $\Phi$ we get
\begin{equation}
\label{eq:ww1a}
w_0 \le \Phi_E(y)\le w_1 \quad \text{for every}\quad y\in A_0.
\end{equation}
Arguing as in the proof of Lemma \ref{lem:h=l} we obtain that the Hausdorff convergence $E_i\to E$ implies the pointwise convergence $\Phi_{E_i}\to \Phi_{E}$. Since $\{\Phi_{E_i}\}_{i\in\nn}$ is bounded, then Egorov's Theorem yields the almost uniform convergence 
$\Phi_{E_i}\to \Phi_{E}$. Consequently there exist $\eps>0$ and $A= A(\eps)\subset A_0$ so that $w_0-\eps>0$, $w_1+\eps<\vv(\esf)$ and $\vv_N(A)>0$ such that
\begin{equation} 
\label{eq:ww2}
0<w_0-\eps \le \Phi_{E_i}(y) \le w_1+\eps<\vv(\esf), \, \text{for every}\, y\in A\, \text{and}\,i\, \text{sufficiently large}.
\end{equation}
Set $w= w_1+\eps \in (0,\vv(\esf))$.  Then, by Fubini's Theorem and that $w_0-\eps$, $\vv_N(A_0)$ are positive, we finally get
\begin{equation}
\label{eq:ww4}
\vv(E_i{\{w\}})\ge\int_A\vv_{g_1}\big((E_i)_y\big)\,d\vv_{g_2}=\int_A\Phi_{E_i}(y)\,d\vv_{g_2}\ge (w_0-\eps)\vv_{g_2}(A):=\de>0,
\end{equation}
for sufficiently large $i\in\nn$. 
This concludes the proof.
\end{proof}
\begin{lemma}
\label{lem:im}
Let $(M_i,g_i)$ be $m_i$-dimensional compact Riemannian manifolds, $i=1,2$. Let $t>0$, $\be\in(0,1)$ and $v= v(t,\be)=\be\vv_{g_+^t}(M_1\times M_2)$. Then
\begin{equation}
\label{eq:im}
I_{g_+^t}(v)\le c\,v^{(m_2-1)/m_2}
\end{equation}
where $c>0$ is independent of $t$.
\end{lemma}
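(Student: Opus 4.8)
The plan is to build an explicit cylinderoid competitor whose $g_+^t$-volume equals $v$ exactly and whose $g_+^t$-perimeter carries the right power of $v$ with a $t$-independent prefactor. First I fix, once and for all, a finite-perimeter set $B\subset M_2$ with $\vv_{g_2}(B)=\be\,\vv_{g_2}(M_2)$ --- for instance an isoperimetric region of $M_2$ of that volume, which exists by the compactness results recalled in the introduction --- and I set $P_0:=\pp_{g_2}(B)$, a quantity depending only on $M_2$ and $\be$ but not on $t$. The competitor is the cylinderoid $E:=M_1\times B$.

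Next I check admissibility and compute the perimeter. By Lemma~\ref{lem:anest}(i) and the product structure of the volume,
\[
\vv_{g_+^t}(E)=t^{m_2}\,\vv_g(E)=t^{m_2}\,\vv_{g_1}(M_1)\,\vv_{g_2}(B)=\be\,t^{m_2}\,\vv_g(M_1\times M_2)=\be\,\vv_{g_+^t}(M_1\times M_2)=v,
\]
so $E$ is admissible for $I_{g_+^t}(v)$. Since $\ptl E=M_1\times\ptl B$ carries the product metric $g_1\times t^2(g_2|_{\ptl B})$, the scaling relation \eqref{eq:est} applied in the second factor --- equivalently, the equality case Lemma~\ref{lem:anest}(iii), because the $g$-normal of $M_1\times\ptl B$ is everywhere tangent to $M_2$ --- gives
\[
\pp_{g_+^t}(E)=\hh_{g_+^t}^{m_1+m_2-1}(M_1\times\ptl B)=\vv_{g_1}(M_1)\,t^{m_2-1}\,\hh_{g_2}^{m_2-1}(\ptl B)=t^{m_2-1}\,\vv_{g_1}(M_1)\,P_0 .
\]

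It remains to eliminate $t$ in favour of $v$. From $v=\be\,t^{m_2}\,\vv_g(M_1\times M_2)$ we get $t^{m_2-1}=\bigl(v/(\be\,\vv_g(M_1\times M_2))\bigr)^{(m_2-1)/m_2}$, and substituting into the perimeter bound yields
\[
I_{g_+^t}(v)\le\pp_{g_+^t}(E)=\vv_{g_1}(M_1)\,P_0\,\bigl(\be\,\vv_g(M_1\times M_2)\bigr)^{-(m_2-1)/m_2}\,v^{(m_2-1)/m_2}=:c\,v^{(m_2-1)/m_2},
\]
where $c$ is assembled solely from $m_2$, $\be$, $\vv_{g_1}(M_1)$, $\vv_{g_2}(M_2)$ and $P_0$, hence is independent of $t$, as claimed. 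I do not anticipate a real obstacle here --- the bound is elementary once the competitor is chosen. The only points deserving care are that $B$ must be taken with $g_2$-volume exactly the fixed fraction $\be$ of $\vv_{g_2}(M_2)$ (not merely small), so that $\vv_{g_+^t}(E)=v$ holds simultaneously for every $t>0$ and $P_0$ is a genuine $t$-independent constant; and that one should invoke \eqref{eq:est} directly in the second factor rather than Lemma~\ref{lem:anest}(ii), which would restrict to $t\le 1$, whereas the asserted bound is meant for all $t>0$.
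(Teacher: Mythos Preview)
Your proof is correct and follows essentially the same route as the paper: both use the cylinderoid $M_1\times B$ with $B\subset M_2$ isoperimetric of volume fraction $\be$, compute its $g_+^t$-perimeter via the scaling relations~\eqref{eq:est}, and eliminate $t$ in favour of $v$ to obtain the same constant $c=\vv_{g_1}(M_1)\,(\be\,\vv_g(M_1\times M_2))^{-(m_2-1)/m_2}\,I_{g_2}(\be\,\vv_{g_2}(M_2))$. Your remark that one should appeal to~\eqref{eq:est} rather than Lemma~\ref{lem:anest}(ii) (which carries the restriction $t\le 1$) is well taken; the paper does exactly this, phrasing the perimeter computation through the scaled profile $I_{t^2g_2}$.
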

\begin{proof}
Let $S\subset M_2$ be an isoperimetric set of volume $\be \vv(M_2)$. Then by definition
\begin{equation}
\label{eq:Im1}
I_{g_+^t}(v)\le \pp_{g_+^t}(M_1\times S)= \vv(M_1)\,I_{t^2g_2}\big(\be \vv(M_2)t^{m_2}\big).
\end{equation}
Where we used that $v=\be\vv(M_1) \vv(M_2)t^{m_2}$, by the hypothesis. Using again this equality
and that 
\begin{equation}
\label{eq:Im2}
I_{{t^2g_2}}\big(\be \vv(M_2)t^{m_2}\big)=t^{m_2-1}I_{g_2}\big(\be \vv(M_2)\big)
\end{equation}
owing to \eqref{eq:est}, the proof follows. Where
\begin{equation}
\label{eq:Im3}
c=\vv(M_1)\,\big(\be\vv(M_1\times M_2) \big)^{-\frac{(m_2-1)}{m_2}}\,I_{g_2}\big(\be \vv(M_2)\big).
\end{equation}
Hence the constant $c$ is independent of $t$, as claimed.
\end{proof}

We are now ready to prove the main results of the section.
\begin{proposition}
\label{prp:pr}
\mbox{}
Assume $0<\be<1$, $t_i\uparrow\infty\,$ and  $\{E_i\}_{i\in\nn}$ be a sequence of compact symmetrized isoperimetric sets in $ \esf\times t_i N$ of volume $\vv_{g_+^{t_i}}(E_i)=\be \vv_{g_+^{t_i}}(\esf\times N)$. Let $t_0>0$ and $s_i=t_0\,t_i^{-1}$. 
\begin{enum}
\item
Then, possibly passing to a non-relabeling subsequence, $\{E_i\}_{i\in\nn}$ converges both in $L^1$ and Hausdorff topology of $\esf\times t_0N$,  to a finite perimeter set $E$ which is a cylinderoid, i.e $E=\esf\times \pi_2(E)$. 
\item Moreover $\pi_2(E)$ is an isoperimetric set in $t_0N$, of volume fraction $\be$.
\item Furthermore
\begin{equation}
\label{eq:ane00}
\vv_{g_+^{t_0}}(E_i)=\vv_{g_+^{t_0}}(\esf\times \pi_2(E)) \,\,\text{and}\,\,\pp_{g_+^{t_0}}(E_i)\le\pp_{g_+^{t_0}}(\esf\times \pi_2(E))
\end{equation}
\end{enum}
\end{proposition}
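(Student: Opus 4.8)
The plan is to run everything with respect to the fixed metric $g:=g_+^{t_0}=g_1\times t_0^2g_2$ on $\esf\times N$ and the large parameter $\tau_i:=t_i/t_0=s_i^{-1}\to\infty$, noting that $g_+^{t_i}=g_1\times\tau_i^2(t_0^2g_2)$ is the right $\tau_i$-homothety of $g$. By the Jacobian computed in the proof of Lemma~\ref{lem:anest}, a point of an $(m+m_2-1)$-rectifiable set with $g$-unit normal $\nu=a\nu_1+b\nu_2$ ($\nu_1,\nu_2$ $g$-unit, tangent to $\esf$ and to $N$, $a^2+b^2=1$) carries the area factor $\tau_i^{m_2-1}(\tau_i^2a^2+b^2)^{1/2}$; hence, since $\tau_i\ge1$,
\[
\pp_{g_+^{t_i}}(\Sg)\ge\tau_i^{m_2-1}\pp_g(\Sg)\quad(\text{equality iff }a\equiv0),\qquad \pp_{g_+^{t_i}}(\Sg)\ge\tau_i^{m_2}\!\int_{\Sg}|a|\,d\hh^{m+m_2-1}_g ,
\]
while $\vv_{g_+^{t_i}}=\tau_i^{m_2}\vv_g$, so $\vv_g(E_i)=\be\,\vv_g(\esf\times N)=:v_0$ is independent of $i$. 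I would then fix an isoperimetric region $S\subset t_0N$ with $\vv_{t_0^2g_2}(S)=\be\,\vv_{t_0^2g_2}(N)$; since $\esf\times S$ has the same $g_+^{t_i}$-volume as $E_i$ and its $g$-normal is tangent to $N$, isoperimetricity of $E_i$ gives
\[
\pp_{g_+^{t_i}}(E_i)\le\pp_{g_+^{t_i}}(\esf\times S)=\tau_i^{m_2-1}\,\vv_{g_1}(\esf)\,\pp_{t_0^2g_2}(S)=:\tau_i^{m_2-1}C' .
\]
Inserting this into the two inequalities above (with $\Sg=\ptl^* E_i$) yields, for $i$ large, the uniform bound $\pp_g(E_i)\le C'$ and the decay $\int_{\ptl^* E_i}|a|\,d\hh^{m+m_2-1}_g\le C'/\tau_i\to0$.

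From $\sup_i(\vv_g(E_i)+\pp_g(E_i))<\infty$ and compactness of sets of finite perimeter, a subsequence converges in $L^1(g)$ to a finite-$g$-perimeter set $E_L$; Blaschke selection gives a further subsequence converging in the Hausdorff topology to a compact $E_H$; and since the $E_i$ are symmetrized, Lemma~\ref{lem:h=l} identifies $E_L=E_H=:E$ a.e. It remains to show $E$ is a cylinderoid. If not, Lemma~\ref{lem:ww} gives $\de>0$ and $0<w<\vv_{g_1}(\esf)$ with $\vv_g(E_i\{w\})=\int_{B_i^w}\Phi_{E_i}\,d\vv_{t_0^2g_2}>\de$ for $i$ large, where $B_i^w=\{y:0<\Phi_{E_i}(y)\le w\}$. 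For $y\in B_i^w$ the slice $(E_i)_y$ is a geodesic ball in $(\esf,g_1)$ of volume $\Phi_{E_i}(y)\le w$, so by Lemma~\ref{lem:ine} $\pp_{g_1}((E_i)_y)\ge\ga_2\,\Phi_{E_i}(y)^{(m-1)/m}\ge\ga_2\,w^{-1/m}\Phi_{E_i}(y)$ with $\ga_2=\ga_2(\esf,w)>0$; integrating over $N$ and using the Fubini/slicing identity $\int_N\pp_{g_1}((E_i)_y)\,d\vv_{t_0^2g_2}=\int_{\ptl^* E_i}|a|\,d\hh^{m+m_2-1}_g$,
\[
\frac{C'}{\tau_i}\ge\int_{\ptl^* E_i}|a|\,d\hh^{m+m_2-1}_g\ge\ga_2\,w^{-1/m}\,\vv_g(E_i\{w\})>\ga_2\,w^{-1/m}\,\de>0 ,
\]
contradicting $\tau_i\to\infty$. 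Thus $E=\esf\times\pi_2(E)$ a.e., proving~(i); in particular $\pi_2(E)$ has finite perimeter in $t_0N$ (as $\pp_g(E)=\vv_{g_1}(\esf)\pp_{t_0^2g_2}(\pi_2(E))$), and $\vv_g(E)=\lim_i\vv_g(E_i)=v_0$ forces $\vv_{t_0^2g_2}(\pi_2(E))=\be\,\vv_{t_0^2g_2}(N)$.

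For the minimality in~(ii) and for~(iii) I would rerun the competitor estimate with $\esf\times A$, $A\subset N$ arbitrary with $\vv_{t_0^2g_2}(A)=\vv_{t_0^2g_2}(\pi_2(E))$: exactly as above $\pp_{g_+^{t_i}}(E_i)\le\pp_{g_+^{t_i}}(\esf\times A)=\tau_i^{m_2-1}\vv_{g_1}(\esf)\pp_{t_0^2g_2}(A)$, and combined with $\pp_{g_+^{t_i}}(E_i)\ge\tau_i^{m_2-1}\pp_g(E_i)$ this gives $\pp_g(E_i)\le\vv_{g_1}(\esf)\pp_{t_0^2g_2}(A)$ for $i$ large. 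Letting $i\to\infty$ and using lower semicontinuity of $\pp_g$ under $L^1(g)$-convergence, together with $\pp_g(E)=\vv_{g_1}(\esf)\pp_{t_0^2g_2}(\pi_2(E))$, yields $\pp_{t_0^2g_2}(\pi_2(E))\le\pp_{t_0^2g_2}(A)$; hence $\pi_2(E)$ is isoperimetric in $t_0N$ of volume fraction $\be$, which is~(ii). Taking $A=\pi_2(E)$ in $\pp_g(E_i)\le\vv_{g_1}(\esf)\pp_{t_0^2g_2}(A)$ and recalling $g=g_+^{t_0}$ gives $\pp_{g_+^{t_0}}(E_i)\le\pp_{g_+^{t_0}}(\esf\times\pi_2(E))$, while $\vv_{g_+^{t_0}}(E_i)=v_0=\vv_{g_1}(\esf)\vv_{t_0^2g_2}(\pi_2(E))=\vv_{g_+^{t_0}}(\esf\times\pi_2(E))$; this is~(iii).

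The one genuinely delicate point is the cylinderoid step. The mechanism is that in $g_+^{t_i}$ any piece of $\ptl E_i$ whose normal is not tangent to $N$ is charged an extra factor $\tau_i$ relative to the purely ``vertical'' competitor $\esf\times S$, forcing the ``horizontal perimeter'' $\int_{\ptl^* E_i}|a|\,d\hh^{m+m_2-1}_g$ to $0$; this contradicts the quantitative non-degeneracy supplied by Lemma~\ref{lem:ww} combined with the isoperimetric inequality on $\esf$ (Lemma~\ref{lem:ine}). Making it precise requires the exact anisotropic Jacobian rather than the bare inequality of Lemma~\ref{lem:anest}, and the Fubini/slicing identity for the $\esf$-directional variation of $\chi_{E_i}$; everything else — Blaschke and finite-perimeter compactness, lower semicontinuity, and the scaling relations \eqref{eq:est} — is standard.
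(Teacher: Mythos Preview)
Your argument is correct and follows the same architecture as the paper's proof: bound $\pp_{g_+^{t_0}}(E_i)$ by the cylinderoid competitor $\esf\times S$, extract $L^1$ and Hausdorff subconvergence via BV--compactness, Blaschke selection and Lemma~\ref{lem:h=l}, then rule out a non-cylinderoid limit by combining Lemma~\ref{lem:ww} with the slice isoperimetric inequality of Lemma~\ref{lem:ine}; parts~(ii) and~(iii) are obtained exactly as in the paper, from lower semicontinuity and the competitor bound. The one organisational difference is in the contradiction step: the paper stays in the blown-up metric $g_+^{t_i}$, uses the coarea inequality with $\apjac_n(\pi_2|_{\partial E_i})\le 1$, obtains $\pp_{g_+^{t_i}}(E_i)\ge c\,\vv_{g_+^{t_i}}(E_i)$, and contradicts the profile bound of Lemma~\ref{lem:im}; you instead work in the fixed metric $g_+^{t_0}$, use the exact anisotropic Jacobian $\tau_i^{m_2-1}(\tau_i^2a^2+b^2)^{1/2}\ge \tau_i^{m_2}|a|$ to get $\int_{\partial^*E_i}|a|\,d\hh_g\le C'/\tau_i\to 0$, and contradict this via the Vol'pert slicing identity $\int_N\pp_{g_1}((E_i)_y)\,d\vv_{t_0^2g_2}=\int_{\partial^*E_i}|a|\,d\hh_g$. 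This is the same mechanism in different bookkeeping; your version is marginally more direct in that it bypasses Lemma~\ref{lem:im}, at the cost of invoking the slicing identity as an equality rather than the inequality the paper uses.
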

\begin{proof}
Owing to \eqref{eq:anest1} we get
\begin{equation}
\label{eq:thm0}
\vv_{g_+^{t_0}}(E_i)=\be\vv_{g_+^{t_0}}(\esf\times  N) :=v_0\quad\text{for every}\,\,i\in\nn. 
\end{equation}
Since $N$ is compact, there exists an isoperimetric region $S\subset N$ of volume $v_0\,\vv(\esf)^{-1}$.  As $E_i$ are isoperimetric by assumption, we have
\begin{equation}
\label{eq:cona}
\vv_{g_+^{t_i}}(\esf\times S)= \vv_{g_+^{t_i}}(E_i) \,\,\text{and}\,\,\pp_{g_+^{t_i}}(E_i)\le\pp_{g_+^{t_i}}(\esf\times S)
\end{equation}
Thanks to (ii) and (iii) of Lemma \ref{lem:anest}, we deduce 
\begin{equation}
\label{eq:conb}
\frac{\pp_{g_+^{t_0}}(E_i)}{\pp_{g_+^{t_0}}(\esf\times S)}=\frac{\pp_{g_+^{s_it_i}}(E_i)}{\pp_{g_+^{s_it_i}}(\esf\times S)}\le\frac{s_i^{n-1}\pp_{g_+^{t_i}}(E_i)}{s_i^{n-1}\pp_{g_+^{t_i}}(\esf\times S)}\le 1
\end{equation} 
Consequently, $\{\pp_{g_+^{t_0}}(E_i)\}_{i\in\nn}$ is bounded and so, possibly passing to a subsequence, $E_i\to E$  in the $L^1(\esf\times t_0 N)$ topology. Hence, by the semi-continuity of the perimeter, we get
\begin{equation}
\label{eq:con1}
\pp_{g_+^{t_0}}(E)\le\liminf\pp_{g_+^{t_0}}(E_i)\quad\text{and}\,\,\vv_{g_+^{t_0}}(E)=\be\vv_{g_+^{t_0}}(\esf\times  N) :=v_0
\end{equation}
Owing to Lemma \ref{lem:h=l}, possibly passing to a non-relabeling subsequence, we get $E_i\to E$ in the Hausdorff topology of $\esf\times t_0 N$. We argue by contradiction, assume that $E\not=\esf\times \pi_2(E)$. Then by Lemma \ref{lem:ww} there are $\de>0$ and  $w_0\in (0,\vv(\esf))$ so that 
\begin{equation}
\label{eq:con2}
\vv_{g_+^{t_0}}(E_i{\{w_0\}})>\de\quad\text{for sufficiently large}\,\,i\in\nn.
\end{equation}
Hence, by \eqref{eq:anest1}, we get
\begin{equation}
\label{eq:con3}
\frac{\vv_{g_+^{t_i}}(E_i{\{w_0\}})}{\vv_{g_+^{t_i}}(E_i)}=\frac{\vv_{g_+^{t_0}}(E_i{\{w_0\}})}{\vv_{{g_+^{t_0}}}(E_i)}= \frac{\vv_{{g_+^{t_0}}}(E_i{\{w_0\}})}{\be\vv_{{g_+^{t_0}}}(\esf\times  N)}\ge \frac{\de}{\be\vv_{{g_+^{t_0}}}(\esf\times  N)} :=c_1>0,
\end{equation}
for sufficiently large $i\in\nn$. Observe that since $ E_i\{w_0\}$ is symmetrized there holds
\begin{equation}
\label{eq:3a}
\pi_2(\ptl E_i\cap E_i\{w_0\})=\pi_2( E_i\{w_0\}) := \Xi_i.
\end{equation}
Now owing to \eqref{eq:con3} and Fubini's Theorem we obtain
\begin{equation}
\label{eq:con4}
\vv_{g_+^{t_i}}(E_i)\le c_1^{-1}\int_{\Xi_i}\hh_{g_1}^m( E_i\cap \pi_2^{-1}(y))\,d\hh_{{t_i}^2g_2}^n
\end{equation}

By the definition of $\Xi_i$ we have
\begin{equation}
\label{eq:con4a}
0<\vv_{g_1}( E_i\cap \pi_2^{-1}(y))\le w_0<\vv(\esf)\quad\text{for every}\quad y\in \Xi_i.
\end{equation}
Consequently
\begin{equation}
\label{eq:con5}
\ga_1\hh_{g_1}^m( E_i\cap \pi_2^{-1}(y))\le \hh_{g_1}^{m-1}\big(\ptl( E_i\cap \pi_2^{-1}(y))\big) \quad\text{for every}\quad y\in \Xi_i,
\end{equation}
where $\ga_1=\ga_1 (\esf,w_0)$ is an isoperimetric constant as in \eqref{eq:Ine1}. Note that
\begin{equation}
\label{eq:5a}
\pi_2\big|^{-1}_{\ptl E_i}(y)=\ptl E_i\cap{\pi_2}^{-1}(y)\supset \ptl( E_i\cap \pi_2^{-1}(y))
\end{equation}
and
\begin{equation}
\label{eq:5b}
\apjac_n(\pi_2\big|_{\ptl{E_i}})\le\apjac_n(\pi_2)= 1.
\end{equation}
By regularity, we have
\begin{equation}
\label{eq:5c}
\pp_{g_+^{t_i}}(E_i)=\hh_{g_+^{t_i}}^{m+n-1}(\ptl E_i).
\end{equation}
Applying the co-area formula, for rectifiable sets \cite[Theorem 3.2.22.]{fed} we get
\begin{equation}
\label{eq:con6}
\pp_{g_+^{t_i}}(E_i)\ge \int_{\Xi_i}\hh_{g_1}^{m-1}\big(\pi_2\big|^{-1}_{\ptl E_i}(y)\big)\,\hh_{{t_i}^2g_2}^n\ge\int_{\Xi_i}\hh_{g_1}^{m-1}\big(\ptl( E_i\cap \pi_2^{-1}(y))\big)\,\hh_{{t_i}^2g_2}^n
\end{equation}
Combining this with \eqref{eq:con4} we finally deduce
\begin{equation}
\label{eq:con7}
I_{g_+^{t_i}}\big(\vv_{g_+^{t_i}}(E_i)\big)=\pp_{g_+^{t_i}}(E_i)\ge c_1\,\ga_1\vv_{g_+^{t_i}}(E_i)\quad\text{for every}\,\,i\ge i_0.
\end{equation}
Which gives a contradiction by \eqref{eq:im}, since $\vv_{g_+^{t_i}}(E_i)\to \infty$ by the hypothesis. As a consequence, $E=\esf\times \pi_2(E)$.

In order to prove (ii) we have to show that $\pi_2(E)$ is isoperimetric. Owing to \eqref{eq:con1} we get  
\begin{equation}
\label{eq:con8}
\vv(\esf)I_{t_0^2g_2}(v_0\,\vv(\esf)^{-1}) \le\vv(\esf)\,\pp_{t_0^2g_2}(\pi_2(E))=\pp_{g_+^{t_0}}(E)\le\liminf\pp_{g_+^{t_0}}(E_i).
\end{equation}
To finish the proof we are going to obtain the inverse inequality. Passing to the limit in \eqref{eq:conb}, taking into account that $S$ is isoperimetric, we get
\begin{equation}
\label{eq:con11}
\limsup\pp_{g_+^{t_0}}(E_i)\le\pp_{g_+^{t_0}}(\esf\times S)=\vv(\esf)\,I_{t_0^2g_2}(v_0\,\vv(\esf)^{-1})
\end{equation} 
Combining \eqref{eq:con8} and \eqref{eq:con11}, we get that $\pp_{t_0^2g_2}(\pi_2(E))=I_{t_0^2g_2}(v_0\,\vv_{\esf}(\esf)^{-1})$.
Finally (iii) follows by \eqref{eq:thm0} and \eqref{eq:conb} for $S=\pi_2(E)$.
\end{proof}

\begin{remark}
\label{rem:complements}
Note that fibrewise symmetrization is compatible with taking complements:
the complement of a symmetrized slice inside a fibre is again a
symmetrized slice (now centred at the antipodal pole).  
Consequently, all arguments in the preceding lemmas and in
Proposition~\ref{prp:pr} apply equally to the complements $E_i^{\,c}$.
We use this observation in the proof of Proposition~\ref{prp:prpc}.
\end{remark}

\begin{proposition}
\label{prp:prpc}
\mbox{}
Under the hypotheses of Proposition~\ref{prp:pr} there holds
\begin{equation}
\label{eq:prpc}
\ptl E_i\to \ptl E\quad\text{in the Hausdorff topology}.
\end{equation}
\end{proposition}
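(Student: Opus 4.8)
\noindent\emph{Proof plan.}
The plan is to reduce \eqref{eq:prpc} to uniform lower density estimates for the sets $E_i$ and their complements, and then run the standard argument showing that a sequence of sets enjoying such estimates and converging in $L^1$ converges in the Hausdorff topology at the level of boundaries. Throughout I work with the closed representatives $E_i=\overline{E_i}$ and $E=\overline{E}$ (recall that isoperimetric sets are identified with their closures), so that $\ptl E_i$ and $\ptl E$ coincide with the topological supports of the respective perimeter measures. Recall from Proposition~\ref{prp:pr}, along the subsequence chosen there, that $E_i\to E$ in $L^1(\esf\times t_0N)$ and in the Hausdorff topology, that $E=\esf\times\pi_2(E)$ with $\pi_2(E)$ isoperimetric in $t_0N$, whence $\ptl E=\esf\times\ptl\pi_2(E)$, and — since $\pi_2(E)$ and its complement satisfy density estimates in $t_0N$ — both $E$ and $E^c$ have positive $\vv_{g_+^{t_0}}$--measure in every ball of $\esf\times t_0N$ centred on $\ptl E$; moreover, combining Proposition~\ref{prp:pr}(ii) with lower semicontinuity of the perimeter, $\pp_{g_+^{t_0}}(E_i)\to\pp_{g_+^{t_0}}(E)$. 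As $\esf\times t_0N$ is compact, \eqref{eq:prpc} is equivalent to the two Kuratowski inclusions: (A) if $z_i\in\ptl E_i$ and $z_i\to z$ in $\esf\times t_0N$, then $z\in\ptl E$; and (B) every $z\in\ptl E$ is a limit of a sequence $z_i\in\ptl E_i$.

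\smallskip
\noindent\emph{Step 1 (uniform slab density estimates).} I would first prove that there are $c,\rho_0>0$, independent of $i$, such that for every $z=(x,y)\in\ptl E_i$ and every $\rho<\rho_0$,
\[
\vv_{g_+^{t_0}}\!\big(E_i\cap(\esf\times B_\rho(y))\big)\ \ge\ c\,\rho^{\,m+n},
\qquad
\vv_{g_+^{t_0}}\!\big(E_i^{\,c}\cap(\esf\times B_\rho(y))\big)\ \ge\ c\,\rho^{\,m+n},
\]
where $m=\dim\esf$, $n=\dim N$, and $B_\rho(y)$ is the geodesic ball of radius $\rho$ about $y$ in $(N,t_0^2g_2)$. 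Both $E_i$ and $E_i^{\,c}$ are isoperimetric in $(\esf\times N,g_+^{t_i})$ — the complement of an isoperimetric region is isoperimetric, and is again symmetrized by Remark~\ref{rem:complements} — so it suffices to treat $E_i$. The argument is a cut--off run against the minimality of $E_i$ in $g_+^{t_i}$, but cutting along the \emph{vertical slabs} $U_\rho:=\esf\times B_\rho(y)$ rather than along geodesic balls of $\esf\times t_0N$: the point is that $\ptl U_\rho=\esf\times\ptl B_\rho(y)$ has $g_+^{t_0}$--normal tangent to $N$, so the equality case of Lemma~\ref{lem:anest}(iii) applies to it and the anisotropic factor $\tau:=t_i/t_0$ relating $g_+^{t_i}$ to the fixed metric $g_+^{t_0}$ enters there with exactly the power $\tau^{n-1}$ that also governs the relevant perimeter term (on a geodesic ball of $\esf\times t_0N$ one would lose an extra $\tau$ on the spherical cap). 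Using only $\pp_{g_+^{t_i}}(E_i\setminus U_\rho)\ge I_{g_+^{t_i}}\!\big(\vv_{g_+^{t_i}}(E_i)-w\big)$ with $w:=\vv_{g_+^{t_i}}(E_i\cap U_\rho)$, the additivity $\pp_{g_+^{t_i}}(E_i)=\pp_{g_+^{t_i}}(E_i;U_\rho^\circ)+\pp_{g_+^{t_i}}(E_i\setminus U_\rho)-\hh_{g_+^{t_i}}^{m+n-1}(E_i^{(1)}\cap\ptl U_\rho)$ (valid for a.e.\ $\rho$), and $\pp_{g_+^{t_i}}(E_i)=I_{g_+^{t_i}}(\vv_{g_+^{t_i}}(E_i))$, one obtains
\[
\pp_{g_+^{t_i}}(E_i;U_\rho^\circ)\ \le\ \big[I_{g_+^{t_i}}(\vv_{g_+^{t_i}}(E_i))-I_{g_+^{t_i}}(\vv_{g_+^{t_i}}(E_i)-w)\big]+\hh_{g_+^{t_i}}^{m+n-1}(E_i^{(1)}\cap\ptl U_\rho).
\]
Converting all quantities from $g_+^{t_i}$ to $g_+^{t_0}$ via Lemma~\ref{lem:anest} (volumes pick up $\tau^{n}$; the cap $E_i^{(1)}\cap\ptl U_\rho$ picks up exactly $\tau^{n-1}$; the perimeter of $E_i$ inside $U_\rho$ picks up at least $\tau^{n-1}$), and invoking the local isoperimetric inequality \eqref{eq:Ine2} in the \emph{fixed} compact manifold $(\esf\times N,g_+^{t_0})$, one is led, for $m(\rho):=\vv_{g_+^{t_0}}(E_i\cap U_\rho)$, to a differential inequality of the shape
\[
\ga_2\,m(\rho)^{\frac{m+n-1}{m+n}}\ \le\ 2\,m'(\rho)+(\La_i\tau)\,m(\rho)+\eps_i\qquad\text{for a.e.\ }\rho<\rho_0,
\]
where $\ga_2$ is the constant of Lemma~\ref{lem:ine} for $(\esf\times N,g_+^{t_0})$ and $\eps_i\to0$. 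The crucial input is that $\La_i\tau=\La_i\,t_i/t_0$ stays bounded, i.e.\ that the marginal perimeter cost of volume for $E_i$ in $g_+^{t_i}$ is $O(t_i^{-1})$: this follows from the scaling identities \eqref{eq:est} and the cylinderoid bound of Lemma~\ref{lem:im} — indeed from Proposition~\ref{prp:pr} itself, which forces $\pp_{g_+^{t_i}}(E_i)$ to be asymptotic to $\vv(\esf)\,t_i^{n-1}I_{g_2}(\be\vv(N))$, so that $I_{g_+^{t_i}}$ agrees, up to lower order, with $v\mapsto\vv(\esf)\,t_i^{n-1}I_{g_2}\!\big(v/(\vv(\esf)t_i^n)\big)$, whose slope is $O(t_i^{-1})$. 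Since moreover $m(\rho)\le\vv_{g_+^{t_0}}(U_\rho)\le C\rho^n\to0$ uniformly in $y$ and $i$, for $i$ large and $\rho$ below a uniform $\rho_0$ the terms $(\La_i\tau)m(\rho)$ and $\eps_i$ are absorbed into $\ga_2 m(\rho)^{(m+n-1)/(m+n)}$, leaving $m'(\rho)\ge\tfrac{\ga_2}{8}\,m(\rho)^{(m+n-1)/(m+n)}$; integrating from $0$ gives $m(\rho)\ge c\,\rho^{m+n}$, and the same argument for $E_i^{\,c}$ gives the second estimate.

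\smallskip
\noindent\emph{Step 2 (the two inclusions).} Granting Step~1, (A) follows by contradiction: if $z=(x,y)\notin\ptl E=\esf\times\ptl\pi_2(E)$, then $y$ lies in the interior of $\pi_2(E)$ or of its complement in $N$; in the first case $\esf\times B_{2\de}(y)\subset E$ for some $\de\in(0,\rho_0)$ (up to a null set), and for large $i$ one has $\pi_2(z_i)\in B_\de(y)$, so the density estimate of Step~1 for $E_i^{\,c}$ gives $\vv_{g_+^{t_0}}\big(E_i^{\,c}\cap(\esf\times B_{2\de}(y))\big)\ge c\,\de^{m+n}$ for all large $i$, contradicting $E_i\to E$ in $L^1(\esf\times t_0N)$; the second case is symmetric, using the estimate for $E_i$. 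For (B): if some $z\in\ptl E$ had $\ptl E_i\cap B_{g_+^{t_0}}(z,\eps)=\emptyset$ along a subsequence, then, $B_{g_+^{t_0}}(z,\eps)$ being connected and $E_i$ a closed representative, the ball would be contained either in $E_i$ or in $(E_i)^c$ up to a null set; passing to the $L^1$ limit would force $\vv_{g_+^{t_0}}\big(E\cap B_{g_+^{t_0}}(z,\eps)\big)\in\{0,\ \vv_{g_+^{t_0}}(B_{g_+^{t_0}}(z,\eps))\}$, contradicting the positivity of the $\vv_{g_+^{t_0}}$--measure of both $E$ and $E^c$ in that ball (a consequence of $\pi_2(E)$ being isoperimetric). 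Thus (A) and (B) hold, and by compactness of $\esf\times t_0N$ this is exactly $\ptl E_i\to\ptl E$ in the Hausdorff topology.

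\smallskip
\noindent\emph{Main obstacle.} The real difficulty is Step~1, and within it the quantitative bound $\La_i\,t_i\le C$ on the Lagrange multiplier of $E_i$ in $g_+^{t_i}$: soft comparison is unavailable because the metrics $g_+^{t_i}$ degenerate relative to the fixed $g_+^{t_0}$ (their bi-Lipschitz constant grows like $t_i/t_0$), so neither the minimality nor the density estimates of $E_i$ transfer directly to $g_+^{t_0}$, and in $g_+^{t_0}$--balls the density of $E_i$ genuinely collapses like $(t_0/t_i)^{n}$. What makes the slab cut--off go through is precisely the matching of anisotropic distortions on the \emph{vertical} slabs $U_\rho$ (the equality case of Lemma~\ref{lem:anest}) together with the fact that the mean curvature of $\ptl E_i$ in $g_+^{t_i}$ is of order $t_i^{-1}$. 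Establishing this last fact carefully — via the scaling of the isoperimetric profile, Lemma~\ref{lem:im}, and ultimately Proposition~\ref{prp:pr} — and checking that the profile increment $I_{g_+^{t_i}}(v)-I_{g_+^{t_i}}(v-w)$ is $O(t_i^{-1}w)+o(t_i^{n-1})$ uniformly in the required range of $w$, so that the error term $\eps_i$ above is indeed absorbable for $i$ large, is the one place that requires genuine work.
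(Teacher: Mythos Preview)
Your Step~2 is essentially the paper's proof of the second inclusion, and inclusion~(B) is handled the same way in both. The divergence is in how the first inclusion (your~(A)) is obtained.

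You build a machine---uniform slab density estimates for $E_i$ and $E_i^{\,c}$ in the fixed metric $g_+^{t_0}$---and correctly identify that the crux is the bound $\La_i\,t_i\le C$ on the Lagrange multiplier, which you only sketch. That bound is plausible (indeed your scaling heuristics are on the right track), but turning it into a rigorous uniform Lipschitz estimate on $v\mapsto I_{g_+^{t_i}}(v)$ near $v_i$, valid for all large~$i$, is genuine work that you have not actually done; the inequality $I_{g_+^{t_i}}(v)\le c\,v^{(n-1)/n}$ from Lemma~\ref{lem:im} bounds the profile, not its increments.

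The paper bypasses all of this. The key observation you missed is that Proposition~\ref{prp:pr} applies not only to $\{E_i\}$ but also, via Remark~\ref{rem:complements}, to the complements $\{E_i^{\,c}\}$: they are again compact, symmetrized, and isoperimetric, so one gets for free
\[
\overline{E_i}\to\overline{E}\quad\text{and}\quad \overline{E_i^{\,c}}\to\overline{E^{\,c}}
\quad\text{in the Hausdorff topology.}
\]
The first inclusion then drops out in two lines: if $x_i\in\ptl E_i=\overline{E_i}\cap\overline{E_i^{\,c}}$ subconverges to $x$, both Hausdorff limits force $x\in\overline{E}\cap\overline{E^{\,c}}=\ptl E$. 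No density estimates, no Lagrange multipliers, no differential inequalities. What your approach would buy, if completed, is a quantitative density statement that is stronger than what is needed here; the paper's route trades that for a one-paragraph argument that reuses the heavy lifting already done in Proposition~\ref{prp:pr}.
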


\begin{proof}
We shall prove that for every $\rho>0$ there exists 
$i_0=i_0(\rho)\in\nn$ such that for every $i\ge i_0$,
\begin{equation}
\label{eq:thmc0}
\ptl E_i\subset [\,\ptl E\,]_{\rho},
\qquad\text{and}\qquad
\ptl E\subset [\,\ptl E_i\,]_{\rho}.
\end{equation}
From Proposition~\ref{prp:pr} we have
\begin{equation}
\label{eq:thmc1}
\overline{E_i}\to \overline{E}
\quad\text{in the Hausdorff topology}.
\end{equation}
Since set-complementation is continuous under $L^1$ convergence and complements of isoperimetric sets are also isoperimetric, the same proof applies to $E_i^c$, and hence
\begin{equation}
\label{eq:thmc2}
\overline{E_i^{\,c}}\to \overline{E^{\,c}}
\quad\text{in the Hausdorff topology}.
\end{equation}

\emph{First inclusion.}
Assume that the first inclusion in the statement fails.  
Then there exist $\eps>0$ and infinitely many
$x_i\in \ptl E_i$ such that
$x_i\notin [\,\ptl E\,]_\eps$.  
Passing to a subsequence, $x_i\to x$ with $x\notin 
[\,\ptl E\,]_{\eps/2}$.  
But $x_i\in \overline{E_i}\cap \overline{E_i^{\,c}}$, and by
\eqref{eq:thmc1}, \eqref{eq:thmc2} we obtain
\begin{equation}
\label{eq:thmc3}
x\in \overline{E}\cap\overline{E^{\,c}}=\ptl E,
\end{equation}
a contradiction.  
Thus the first inclusion follows.

\emph{Second inclusion.} 

Fix $\rho>0$ and argue by contradiction.  
Assume there are infinitely many $x_i\in\ptl E$ with 
$x_i\notin [\,\ptl E_i\,]_{\rho}$.  Since $x_i\notin [\,\ptl E_i\,]_{\rho}$, we have
$d(x_i,\ptl E_i)> \rho$. 

Passing to a subsequence $x_i\to x\in\ptl E$. By regularity, $x$ is a measure–theoretic boundary point of $E$. 
Choose
$r\in(0,\rho/2)$ small enough so that
\begin{equation}
\label{eq:rev.new}
\vv(B_r(x)\setminus E)>0
\quad\text{and}\quad
\vv(E\cap B_r(x))>0.
\end{equation}
Then, for all large $i\in\nn$,
\begin{equation}
\label{eq:rev1}
B_{r}(x)\cap \ptl E_i=\emptyset.
\end{equation}
Passing to a subsequence, assume that for all large $i\in\nn$ either 
$B_{r}(x)\subset E_i$ or $B_{r}(x)\subset E_i^{\,c}$.
Without loss of generality suppose that 
$B_{r}(x)\subset E_i$ for all large $i$ (the other case is treated analogously). Thus the set 
$E\setminus E_i$ does not intersect the ball.
Hence
\begin{equation}
\label{eq:rev2}
  (E_i\triangle E)\cap B_{r}(x)
  = (E_i\setminus E)\cap B_{r}(x)
  = B_{r}(x)\setminus E.
\end{equation}
Consequently \eqref{eq:rev.new} implies
\begin{equation}
\label{eq:rev3}
\vv\bigl((E_i\triangle E)\cap B_{r}(x)\bigr) 
=\vv(B_{r}(x)\setminus E)>0
\quad\text{for all large }i\in\nn,
\end{equation}
which contradicts $\chi_{E_i}\to\chi_E$ in $L^1$.
Thus the second inclusion follows.
Combining this with the first inclusion yields the claimed Hausdorff
convergence of the boundaries.

\end{proof}

\section{Eigenvalues and Stability}

In this section we recall the spectral behaviour of the Laplacian on
products under anisotropic homotheties. This description will be used
to analyse the Jacobi operator of $tM\times\Sigma$ and to show that
$tM\times\Sigma$ is stable for all sufficiently small $t>0$.

Let $(M_i,g_i)$, $i=1,2$, be compact Riemannian manifolds and $t>0$. Denote by
\begin{equation}
\label{eq:sp}
\spec(g_i)=\{\la_j(g_i)\}_{j\in\{0\}\cup\nn}
\end{equation}
the increasing sequence of eigenvalues of the Laplacian $\Delta^{g_i}$, $i=1,2$. 
If
\begin{equation}
\label{eq:eigprt}
u(x_1,x_2)=u_1(x_1)\,u_2(x_2), \qquad x_i\in M_i,
\end{equation}
and $\Delta=\Delta^{g_1\times g_2}$, then
\begin{equation}
\label{eq:delprt}
\Delta u = u_2 \, \Delta^{g_1}u_1 + u_1 \, \Delta^{g_2}u_2.
\end{equation}
It follows that
\begin{equation}
\label{eq:spprt}
\spec(g_1\times g_2) = \big\{\la_{j_1}(g_1) + \la_{j_2}(g_2)\big\}_{j_1,j_2\in\{0\}\cup\nn}
\end{equation}
and that the eigenspaces of $\spec(g_1\times g_2)$ are spanned by products of functions from the eigenspaces of $\spec(g_i)$, $i=1,2$, see Chapter II in \cite{Chaveleigen}. 

Moreover,
\begin{equation}
\label{eq:tla}
\Delta^{t^2g_i} = t^{-2}\Delta^{g_i} \qquad \text{and hence} \qquad \la_j(t^2g_i)=t^{-2}\la_j(g_i),\quad j\in\{0\}\cup\mathbb{N}.
\end{equation}

\begin{proposition}
\label{Thm:sts}
Let $\Sigma\subset N$ be a smooth constant–mean–curvature hypersurface.
If $\Sigma$ is stable, then $tM\times\Sigma$ is stable for all sufficiently small $t>0$.
\end{proposition}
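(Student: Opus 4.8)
The plan is to realise $\Sigma_t:=tM\times\Sigma$ as a constant–mean–curvature hypersurface of the left anisotropic homothety $tM\times N$, to compute its Jacobi operator via the product structure, and then to run the spectral splitting of Section~3 in the $M$–variable. Since $tM\times N$ is a Riemannian product, the slices $tM\times\{y\}$ are totally geodesic; hence the unit normal $\nu$ of $\Sigma_t$ at $(x,y)$ is the horizontal lift of the unit normal $\nu_\Sigma(y)$ of $\Sigma$ in $N$, the shape operator of $\Sigma_t$ vanishes on $T(tM)$ and restricts to that of $\Sigma$ on $T\Sigma$, and $\ric_{tM\times N}(\nu,\nu)=\ric_N(\nu_\Sigma,\nu_\Sigma)$. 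In particular $\Sigma_t$ has the same constant mean curvature as $\Sigma$, while $|\sg_{\Sigma_t}|^2$ and $\ric_{tM\times N}(\nu,\nu)$ are the pullbacks to $\Sigma_t$ of the corresponding functions on $\Sigma$ and are constant along the $M$–factor. The induced metric on $\Sigma_t$ is $h_t:=t^2g_M\oplus g_\Sigma$, so by \eqref{eq:delprt} and \eqref{eq:tla} its Laplacian splits as $\Delta^{h_t}=t^{-2}\Delta^{g_M}+\Delta^{g_\Sigma}$, whence
\[
  J_{\Sigma_t}u=t^{-2}\Delta^{g_M}u+J_\Sigma u,
\]
where $\Delta^{g_M}$ acts in the $M$–variable and $J_\Sigma$ is the Jacobi operator of $\Sigma\subset N$ acting in the $\Sigma$–variable.

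Next I would diagonalise the index form. Fix an $L^2(M,g_M)$–orthonormal basis $\{e_j\}_{j\ge0}$ of eigenfunctions, $-\Delta^{g_M}e_j=\la_j(g_M)e_j$, with $e_0$ constant and $0=\la_0(g_M)<\la_1(g_M)\le\cdots$ (here one uses that $M$ is connected, so the first nonzero eigenvalue is strictly positive). Given a test function $u$ on $\Sigma_t$, write $u(x,y)=\sum_{j\ge0}u_j(y)\,e_j(x)$ with $u_j\in C^\infty(\Sigma)$. The volume element of $h_t$ equals $t^{\dim M}$ times that of $g_M\oplus g_\Sigma$, and since $\int_Me_j=0$ for $j\ge1$, the constraint $\int_{\Sigma_t}u=0$ is equivalent to $\int_\Sigma u_0=0$. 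Setting $Q_\Sigma(v):=-\int_\Sigma v\,J_\Sigma v$ and using orthogonality of $\{e_j\}$ to eliminate the cross terms, one obtains
\[
  -\int_{\Sigma_t}u\,J_{\Sigma_t}u
  = t^{\dim M}\Bigl(Q_\Sigma(u_0)+\sum_{j\ge1}\bigl(t^{-2}\la_j(g_M)\!\int_\Sigma u_j^2+Q_\Sigma(u_j)\bigr)\Bigr).
\]
The $j=0$ term is $\ge0$ by the stability inequality \eqref{eq:stab} for $\Sigma$, exactly because $\int_\Sigma u_0=0$. For $j\ge1$ the mean–value constraint has already been spent, so instead I would absorb the finitely many negative directions of $J_\Sigma$: since $\Sigma$ is compact, $J_\Sigma=\Delta^{g_\Sigma}+\bigl(\ric_N(\nu,\nu)+|\sg_\Sigma|^2\bigr)$ is $\Delta^{g_\Sigma}$ plus a bounded potential, hence bounded above as an operator, so there is $C=C(\Sigma)\ge0$ with $Q_\Sigma(v)\ge-C\int_\Sigma v^2$ for every $v$. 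Thus each $j\ge1$ summand is $\ge(t^{-2}\la_1(g_M)-C)\int_\Sigma u_j^2\ge0$ once $t\le(\la_1(g_M)/C)^{1/2}$ (any $t>0$ if $C=0$). For such $t$ every term in the bracket is nonnegative, so $-\int_{\Sigma_t}u\,J_{\Sigma_t}u\ge0$ for all admissible $u$, i.e.\ $tM\times\Sigma$ is stable.

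The main obstacle is the tension between a single scalar constraint and infinitely many Fourier modes: $\int_{\Sigma_t}u=0$ pins down only the zero mode $u_0$, so stability of $\Sigma$ is available precisely for that mode, while all higher modes must be controlled by an entirely different mechanism — the blow-up $t^{-2}\la_1(g_M)\to\infty$ dominating the uniformly bounded, finite-dimensional negativity of $J_\Sigma$. The remaining points are routine: justifying the fibrewise constancy of $\ric$ and $|\sg|^2$ on $\Sigma_t$ and the product splitting $\Delta^{h_t}=t^{-2}\Delta^{g_M}+\Delta^{g_\Sigma}$, the decomposition of $u$ together with the interchange of sum and integral in the index form, and recording that connectedness of $M$ is what guarantees $\la_1(g_M)>0$.
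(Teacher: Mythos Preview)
Your proof is correct and follows essentially the same approach as the paper: both compute $J_{\Sigma_t}=t^{-2}\Delta^{g_M}+J_\Sigma$ via the product splitting, use the stability of $\Sigma$ for the zero $M$-mode (where the mean constraint lands), and dominate the bounded potential $q=\ric_N(\nu,\nu)+|\sg_\Sigma|^2$ by $t^{-2}\la_1(g_M)$ on the higher modes, arriving at the same threshold $t_0^2=\la_1(g_M)/\|q\|_\infty$. The only cosmetic difference is that the paper reduces to product test functions $f=u(x)v(\vsg)$ and treats the two cases $\int_M u=0$ and $\int_\Sigma v=0$ separately, whereas you carry out the full Fourier expansion in the $M$-variable; these are equivalent presentations of the same spectral argument.
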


\begin{proof}
Let $\bar{g}=g_1\times\bar{g}_2$, where $\bar{g}_2=g_2|_{T\Sg\times T\Sg}$. 
Note that the outer unit normal $\nu$ on $tM\times \Sg$ is tangent to the second factor. Consequently,
\begin{equation}
\label{eq:curvs}
\ric^{g_-^{t}}(\nu,\nu)=\ric^{\,g_2}(\nu,\nu), 
\qquad 
\sg^{\bar{g}_-^{t}}=\sg^{\,\bar{g}_2},
\end{equation}
where $\ric$ denotes Ricci tensor and $\sg$ second fundamental form. Hence
\begin{equation}
\label{eq:sts00}
q(x,\vsg):=\ric^{g_-^{t}}(\nu,\nu)+|\sg^{\bar{g}_-^{t}}|^2
= \ric^{\,g_2}(\nu,\nu)+|\sg^{\,\bar{g}_2}|^2 :=q(\vsg).
\end{equation}
Set
\begin{equation}
\label{eq:sts0}
t_0^2=\frac{\la_1(g_1)}{\|q\|_\infty}, \qquad t<t_0.
\end{equation}
As a consequence of the product spectral decomposition, it suffices to show that
\begin{equation}
\label{eq:sts1}
-\int_{tM\times\Sg} f\big(\Delta f+qf\big)\geq0 \qquad \text{for all smooth } f \text{ with } \int_{tM\times\Sg} f=0,
\end{equation}
where $f(x,\vsg)=u(x)v(\vsg)$, $x\in M$, $\vsg\in \Sg$ and $\Delta=\Delta^{\bar{g}_-^{t}}$. Owing to \eqref{eq:delprt}
\begin{equation}
\label{eq:sts2}
\begin{split}
-\int_{tM\times\Sg}f(\Delta f+qf)
&=-\int_{tM\times\Sg} v^2 u\Delta^{t^2g_1} u 
    -\int_{tM\times\Sg} u^2 v \Delta^{\bar{g}_2} v
    -\int_{tM\times\Sg} q u^2 v^2.
\end{split}
\end{equation}
Applying Stokes’ theorem:
\begin{equation}
\label{eq:sts3}
-\int_{tM\times\Sg} v^2 u \Delta^{t^2g_1}u 
= \int_{tM} |\nabla^{t^2g_1}u|^2_{t^2g_1}\,\int_{\Sg} v^2,
\end{equation}
and
\begin{equation}
\label{eq:sts4}
-\int_{tM\times\Sg} u^2 v \Delta^{\bar{g}_2}v 
= \int_{tM} u^2 \,\int_{\Sg} |\nabla^{\bar{g}_2}v|^2_{\bar{g}_2}.
\end{equation}

Now, two cases arise depending on the zero mean condition in \eqref{eq:sts1}

\emph{Case 1:}
 $\int_{M}u=0$. Then, by the variational characterization of the first eigenvalue and the previous identities,
\begin{equation}
\label{eq:sts2a}
\begin{split}
&-\int_{tM\times\Sg}f\big(\Delta f+qf\big)
\\
=&-\int_{tM\times\Sg}v^2u\Delta^{t^2g_1} u-\int_{tM\times\Sg}u^2v\Delta^{{\bar{g}_2}} v -\int_{tM\times\Sg}qu^2v^2
\\
\ge&-\int_{tM\times\Sg}v^2u\Delta^{t^2g_1} u -\int_{tM\times\Sg}qu^2v^2
\\
= &\int_{\Sg}v^2 \Big(\int_{tM}|\nabla^{t^2g_1}u|^2_{t^2g_1}-qu^2\Big)
\\
\ge &\int_{\Sg}v^2\int_{tM} \Big(\la_1(t^2g_1)u^2-qu^2\Big)
\\
= &\int_{\Sg}v^2\,\int_{tM} \Big(t^{-2}\la_1(g_1)u^2-qu^2\Big)
\\
\ge &\int_{\Sg}v^2\,\int_{tM} \Big(t^{-2}\la_1(g_1) u^2-\norm{q}_\infty u^2\Big)
\\
= &(t^{-2}\la_1(g_1) -\norm{q}_\infty)\int_{tM\times\Sg} f^2>0.
\end{split}
\end{equation}
Where in the last step we used \eqref{eq:sts0}.

\emph{Case 2:}
 $\int_{\Sg}v=0$. Then, by the previous and that $-\int_{\Sg}v(\Delta^{{\bar{g}_2}} v+qv)\ge 0$ since $\Sg$ is stable by the hypothesis and $q$ is as in \eqref{eq:sts00},  we get
\begin{equation}
\label{eq:sts3}
\begin{split}
&-\int_{tM\times\Sg}f\big(\Delta f+qf\big)
\\
=&-\int_{tM\times\Sg}v^2u\Delta^{t^2g_1} u-\int_{tM\times\Sg}u^2v\Delta^{{\bar{g}_2}} v -\int_{tM\times\Sg}qu^2v^2
\\
\ge&-\int_{tM\times\Sg}u^2v\Delta^{{\bar{g}_2}} v -\int_{tM\times\Sg}qu^2v^2
\\
= &-\int_{tM}u^2\,\int_{\Sg}v(\Delta^{{\bar{g}_2}} v+qv) \ge 0.
\end{split}
\end{equation}
Thus $tM\times \Sg$ is stable for all $t<t_0$.
\end{proof}

\begin{remark}
\label{rem:anstbl}
Note that, by \eqref{eq:sts0}, if $t>t_0$ then $tM\times \Sg$ is unstable (take $u$ to be an eigenfunction corresponding to $\lambda_1(g_1)$). Hence, such product hypersurfaces cannot be isoperimetric.
\end{remark}
\begin{remark}
\label{rem:strsbl}
The arguments of the previous proof yield that if $\Sigma$ is strictly stable, then, for all $0<t<t_0$, the product hypersurface $tM\times\Sigma$ is strictly stable.
\end{remark}

\section{Proof of the main result}

In this section, we establish the main theorem.

Following Chodosh–Engelstein–Spolaor \cite{rqi}, we denote by 
$C^{k,\alpha}_\B(\Sg_t)$ and $L^2_\B(\Sg_t)$ 
the closed subspaces of $C^{k,\alpha}(\Sg_t)$ and $L^2(\Sg_t)$
consisting of functions with zero average on $\Sg_t$.
In particular, the tangent space $T_0 \B(\Sg_t)$ of the
volume–constrained Banach manifold $\B(\Sg_t)$ can be identified
with $C^{2,\alpha}_\B(\Sg_t)$.

Let $\Sg\subset N$ be a constant mean curvature stable hypersurface. 
Define $\Sg_t := tM\times \Sg$ and
\begin{equation}
\label{eq:defW}
\La :=\Big\{ u\in C^{0,\alpha}_{\B}(\Sg_t)\,\mid u(x,\vsg)=u(\vsg), x\in M,\vsg\in \Sg \Big\}.
\end{equation} 
The arguments in Proposition~\ref{Thm:sts} yield that $\Sg_t$ is stable for sufficiently small $t>0$ and
\begin{equation}
\label{eq:KS=}
K(\Sg_t)\subset \La, \qquad t>0 \text{ sufficiently small},
\end{equation}
where $K$ stands for the kernel of the Jacobi operator.

Let $F$ be a finite perimeter set in $tM\times N$ so that $\ptl F$ is a smooth $g_-^{t}$- graph around $\Sg_t$ with corresponding function $f$. We denote
\begin{equation}
\label{eq:defp}
\pp_{g_-^{t}}(F):=\pp(f).
\end{equation}

\begin{lemma}
\label{lem:Υ}
Let $\Sg_t$ be as above.
If $t$ is sufficiently small, there exists a neighborhood $U$ of the origin in $C_\B^{2,\alpha}(\Sg_t)$ and a map 
\begin{equation}
\label{eq:Υ1}
\Upsilon: K(\Sg_t)\cap U \longrightarrow K(\Sg_t)^\perp 
\end{equation}
such that 
\begin{equation}
\label{eq:Υ2}
\Pi_{K(\Sg_t)^\perp}\big(\nabla \pp(\zeta+\Upsilon(\zeta))\big)=0.
\end{equation}
Here $\Pi$ denotes the $L^2_\B(\Sg_t)$–orthogonal projection and $^\perp$ its orthogonal complement. 
Moreover, setting
\begin{equation}
\label{eq:Υ3}
\LLL := \{\zeta + \Upsilon(\zeta)\mid \zeta  \in U \cap K(\Sg_t)\}\subset T_0 \B(\Sg_t),
\end{equation}
we have
\begin{equation}
\label{eq:Υ4}
\LLL\subset \La
\end{equation}
\end{lemma}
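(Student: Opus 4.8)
The plan is to obtain $\Upsilon$ by a Lyapunov--Schmidt reduction applied to the map $\zeta \mapsto \nabla\pp(\zeta)$ on a neighborhood of the origin in $C^{2,\alpha}_\B(\Sg_t)$, exactly as in \cite{rqi}, and then verify the extra symmetry statement \eqref{eq:Υ4} using the splitting $\Sg_t = tM\times\Sg$. First I would record that $\pp$ is a smooth functional on $\B(\Sg_t)$ near $0$ with $\nabla\pp(0)=0$ (since $\Sg_t$ has constant mean curvature, hence is volume-constrained stationary by Proposition~\ref{Thm:sts}'s setup), and that its Hessian at $0$ is, up to sign, the Jacobi operator $J_{\Sg_t}$ restricted to $C^{2,\alpha}_\B$. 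Write $K=K(\Sg_t)$ for its (finite-dimensional) kernel, $\Pi$ for the $L^2_\B$-orthogonal projection onto $K^\perp$, and decompose any $u = \zeta + w$ with $\zeta\in K$, $w\in K^\perp$. The equation $\Pi(\nabla\pp(\zeta+w))=0$ is solved for $w=w(\zeta)$ by the implicit function theorem in Banach spaces: the partial derivative in $w$ at $(\zeta,w)=(0,0)$ is $\Pi\circ D^2\pp(0)|_{K^\perp} = -\Pi\circ J_{\Sg_t}|_{K^\perp}$, which is an isomorphism $C^{2,\alpha}_\B\cap K^\perp \to C^{0,\alpha}_\B\cap K^\perp$ by elliptic Schauder theory (since $K$ is precisely the kernel and $J_{\Sg_t}$ is self-adjoint Fredholm of index zero). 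This yields a smooth $\Upsilon:=w$ on a neighborhood $U\cap K$ of the origin with $\Upsilon(0)=0$, $D\Upsilon(0)=0$, satisfying \eqref{eq:Υ2}; setting $\LLL$ as in \eqref{eq:Υ3} gives a finite-dimensional submanifold of critical-in-$K^\perp$ directions.

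For the containment \eqref{eq:Υ4} the idea is a uniqueness/equivariance argument. By \eqref{eq:KS=} we already know $K\subset\La$, so any $\zeta\in U\cap K$ depends only on the $\Sg$-variable. I would exploit that the group $G:=\mathrm{Isom}(tM)$ (or at least the identity component, which acts transitively on $M$ and whose action is all we need) acts on $\B(\Sg_t)$ by $g\cdot u(x,\vsg)=u(g^{-1}x,\vsg)$, and that $\pp$ is $G$-invariant — because $G$ acts by isometries on the first factor of $g_-^t$ and $\Sg_t$ is a product, so the $g_-^t$-area of the graph of $u$ equals that of the graph of $g\cdot u$. Consequently $\nabla\pp$ is $G$-equivariant, $\Pi$ commutes with the $G$-action (the action is by $L^2$-isometries preserving $K$ and hence $K^\perp$), and therefore the solution map $\Upsilon$ is $G$-equivariant: $\Upsilon(g\cdot\zeta)=g\cdot\Upsilon(\zeta)$ by uniqueness in the implicit function theorem. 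Since $\zeta\in\La$ is $G$-invariant ($g\cdot\zeta=\zeta$), we get $g\cdot\Upsilon(\zeta)=\Upsilon(\zeta)$ for all $g\in G$; as $G$ acts transitively on $M$, a $G$-invariant function on $\Sg_t$ is constant along the $M$-fibers, i.e. $\Upsilon(\zeta)\in\La$. Hence $\zeta+\Upsilon(\zeta)\in\La$, which is \eqref{eq:Υ4}.

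The main obstacle I expect is the symmetry step, specifically making precise that the reduction map $\Upsilon$ inherits the $G$-equivariance. One has to be careful that: (a) the group action is by bounded linear operators on all the relevant Hölder and $L^2$ spaces with uniform norms, and preserves the zero-average subspaces $C^{k,\alpha}_\B$ and $L^2_\B$; (b) it genuinely commutes with $J_{\Sg_t}$ — this is where \eqref{eq:curvs} is used, since $\ric^{g_-^t}(\nu,\nu)$ and $|\sg^{\bar g_-^t}|^2$ are functions of $\vsg$ only, so the zeroth-order coefficient of $J_{\Sg_t}$ is $M$-translation invariant, and the Laplacian on the product splits as in \eqref{eq:delprt}; and (c) the neighborhood $U$ can be chosen $G$-invariant (take the intersection of $U$ with its $G$-translates, using compactness of $G$ to keep it open), so that uniqueness of $\Upsilon$ on $U\cap K$ forces equivariance. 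A minor additional point is checking $\nabla\pp(0)=0$ and that $D^2\pp(0)$ is the Jacobi operator with the correct sign; these are standard first/second variation computations as in \cite{bd}, but they should be cited rather than redone. Once equivariance is in hand, \eqref{eq:Υ4} is immediate from transitivity of the $M$-action, and \eqref{eq:Υ2}, \eqref{eq:Υ3} are just the conclusions of the implicit function theorem packaged as stated.
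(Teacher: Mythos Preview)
Your construction of $\Upsilon$ via the implicit function theorem is equivalent to the paper's, which applies the inverse function theorem to $\NN(\zeta):=D\pp(\zeta)+\Pi_{K(\Sg_t)}(\zeta)$ and then sets $\Upsilon(\zeta)=\Pi_{K(\Sg_t)^\perp}\NN^{-1}(\zeta)$; these are the same Lyapunov--Schmidt reduction, packaged slightly differently. The real divergence is in how you obtain \eqref{eq:Υ4}.

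Your equivariance argument rests on the claim that the identity component of $\mathrm{Isom}(tM)$ acts transitively on $M$. But the lemma is stated for an \emph{arbitrary} compact Riemannian manifold $M$, and a generic such $M$ has trivial isometry group; in that case $G$-invariance of a function on $\Sg_t$ is vacuous and the conclusion $\Upsilon(\zeta)\in\La$ does not follow. So as written your proof of \eqref{eq:Υ4} does not cover the lemma in the generality stated.

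The paper sidesteps any symmetry hypothesis on $M$ by a direct factorization: if $\zeta\in\La$, say $\zeta(x,\vsg)=\eta(\vsg)$, then the normal graph of $\zeta$ over $\Sg_t=tM\times\Sg$ is $tM$ times the graph of $\eta$ over $\Sg$, whence $\pp(\zeta)=\vv(tM)\,\pp_N(\eta)$ by Fubini. Hence $D\pp$ maps $\La$ into $\La$, and since $K(\Sg_t)\subset\La$ by \eqref{eq:KS=}, so does $\NN$; uniqueness in the inverse function theorem then forces $\Psi=\NN^{-1}$ to preserve $\La$, giving \eqref{eq:Υ4} with no assumption on $M$ beyond compactness. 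Your group-action argument is salvageable for the application in Theorem~\ref{thm:main}, since Proposition~\ref{prp:reduce} reduces the first factor to a round sphere where transitivity does hold, but to prove the lemma as stated you should replace it with the factorization argument.
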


\begin{proof}
The proof is modeled on Appendix~A of \cite{rqi}. We include a brief outline here to ensure the condition \eqref{eq:Υ4}
Define
\begin{equation}
\label{eq:Υ5}
\NN(\zeta):=D\pp(\zeta)+\Pi_{K(\Sg_t)}(\zeta).
\end{equation}
Then $D\NN(0)$ is an isomorphism 
$C_\B^{2,\alpha}(\Sg_t)\to C_\B^{0,\alpha}(\Sg_t)$, and the inverse function 
theorem provides neighborhoods of the origin
\begin{equation}
\label{eq:Υ6}
W\subset C_\B^{0,\alpha}(\Sg_t),\qquad 
U\subset C_\B^{2,\alpha}(\Sg_t)
\end{equation}
and a homeomorphism 
\begin{equation}
\label{eq:Υ7}
\Psi:=\NN^{-1}:W\to U.
\end{equation}

We show that the construction restricts to $\La$.  
If $\zeta\in \La$, then the perimeter functional factorizes as
\begin{equation}
\label{eq:Υ8}
\pp(\zeta)=\vv(tM)\,\pp_N(\eta), \quad\text{where}\,\zeta(x,\vsg)=\eta(\vsg).
\end{equation}
Thus
\begin{equation}
\label{eq:Υ9}
D\pp(\zeta)\in\La.
\end{equation}

Owing to \eqref{eq:KS=}, we have
\begin{equation}
\label{eq:Υ10}
\NN(\zeta)\in\La
 \qquad\text{for all }\zeta\in \La,
\end{equation}

By \eqref{eq:Υ10}, the uniqueness in the inverse function theorem implies that
\begin{equation}
\label{eq:Υ11}
\Psi(W\cap\La)=U\cap\La .
\end{equation}
After shrinking $U$ if necessary, we may assume $K(\Sg_t)\cap U\subset W$. As $K(\Sg_t)\subset\La$ for $t$ small, by \eqref{eq:KS=}, we have
\begin{equation}
\label{eq:Υ12}
\Psi(U\cap K(\Sg_t))\subset\La .
\end{equation}

Finally,
\begin{equation}
\label{eq:Υ13}
\Psi(\zeta)=\zeta+\Upsilon(\zeta),\qquad 
\Upsilon(\zeta)=\Pi_{K(\Sg_t)^\perp}\Psi(\zeta),
\end{equation}
yields \eqref{eq:Υ3} and \eqref{eq:Υ2}.
Combining \eqref{eq:Υ12} and \eqref{eq:Υ13} we obtain \eqref{eq:Υ4}
\end{proof}

\begin{theorem}
\label{thm:main}
Let $M,N$ be compact Riemannian manifolds and $\be\in (0,1)$. Assume that the boundaries of the isoperimetric regions in $N$ of volume fraction $\be$ are of class $C^{2,\alpha}$.
\begin{enum}
\item Then there exists $t_0=t_0(\be)>0$ so that  for every $0<t\le t_0$
every isoperimetric region of volume $\be\vv(t M \times N)$ in $t M \times N$, is of the form  $ M \times S$, where $S$ is an isoperimetric region in $N$.

By scaling isotropically, we get equivalently.
\item
For every $s\ge s_0$
every isoperimetric region of volume $\be \vv(M \times s N)$ in $M \times s N$, is of the form  $M \times S$, where $S$ is an isoperimetric region in $s N$ and $s_0=t_0^{-1}$.
\end{enum}
\end{theorem}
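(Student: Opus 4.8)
The plan is to combine the three previous sections into a contradiction argument. Suppose the conclusion of part (i) fails: then there is a sequence $t_i\downarrow 0$ and isoperimetric regions $E_i\subset t_iM\times N$ of volume fraction $\be$ that are \emph{not} of the form $M\times S$. By the Appendix reduction (Proposition~\ref{prp:reduce}) it suffices to work in $\esf^{m}\times N$ after a volume–perimeter preserving identification, and by the Ros–Morgan horizontal symmetrization we may assume the $E_i$ are symmetrized. Rescaling isotropically (equivalently, viewing $t_i M\times N$ as $\esf^m\times s_i N$ with $s_i=t_i^{-1}\uparrow\infty$, using the scaling identities \eqref{eq:est}), Proposition~\ref{prp:pr} applies: after passing to a subsequence, $E_i\to E$ in $L^1$ and Hausdorff, $E=\esf^m\times\pi_2(E)$ is a cylinderoid, $\pi_2(E)=:\Sg$ is isoperimetric of volume fraction $\be$ in the appropriate homothety of $N$, and one has the volume equality and perimeter inequality \eqref{eq:ane00}; moreover Proposition~\ref{prp:prpc} gives $\ptl E_i\to\ptl E$ in Hausdorff distance.

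Next I would set up the local analysis around $\Sg_t:=tM\times\Sg$. By hypothesis the isoperimetric boundaries in $N$ of fraction $\be$ are $C^{2,\alpha}$, so $\Sg$ is a smooth closed constant–mean–curvature hypersurface, and since it is isoperimetric it is stable; by Proposition~\ref{Thm:sts} (and Remark~\ref{rem:strsbl} for the strictly stable case), $\Sg_t$ is stable for all sufficiently small $t$, with Jacobi kernel contained in $\La$ by \eqref{eq:KS=}. By standard regularity theory for perimeter minimizers and the Hausdorff convergence $\ptl E_i\to\ptl E$, for $i$ large $\ptl E_i$ is a $C^{2,\alpha}$ graph over $\Sg_{t_i}$ with graphing function $f_i$, and $f_i\to 0$ in $C^{2,\alpha}$. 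Since $E_i$ is isoperimetric it is a volume–constrained critical point, so $f_i$ is a zero of $\Pi_{L^2_\B}\nabla\pp$ lying in $C^{2,\alpha}_\B(\Sg_{t_i})$. Apply Lemma~\ref{lem:Υ}: for $t_i$ small the Lyapunov–Schmidt reduction produces the finite–dimensional manifold $\LLL\subset\La$ of all critical graphs near $0$, so $f_i=\zeta_i+\Upsilon(\zeta_i)\in\LLL\subset\La$ for $i$ large, i.e. $f_i$ depends only on the $\Sg$–variable. This forces $E_i$ itself to be a cylinderoid $M\times S_i$ (with $\ptl S_i$ the graph of $f_i$ over $\Sg$ in $N$), contradicting the choice of $E_i$.

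The one point needing care — and the main obstacle — is justifying the Lyapunov–Schmidt step for the \emph{exact} minimizers rather than just near-critical graphs: I need that the reduced critical set $\LLL$ actually captures $f_i$, which requires the reduction neighborhood $U\subset C^{2,\alpha}_\B(\Sg_{t_i})$ to be uniform enough (or, equivalently, that $f_i$ enters a fixed small neighborhood) as $t_i\to 0$. This is handled by noting that $f_i\to 0$ in $C^{2,\alpha}$ while the linearized operator $D\NN(0)$ on $\Sg_{t_i}$ has, by the spectral splitting and eigenvalue scaling of Section~3, a uniformly bounded inverse on the orthogonal complement of $K(\Sg_{t_i})$ for $t_i$ small (the $\esf^m$–directions contribute eigenvalues $\ge t_i^{-2}\la_1(g_1)$, which dominate $\|q\|_\infty$), so the inverse function theorem applies on a ball whose radius does not shrink to zero; then $f_i$ lies in this ball for $i$ large and the conclusion $f_i\in\LLL$ follows. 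Once part (i) is established, part (ii) is immediate: the isotropic rescaling $x\mapsto s x$ identifies $tM\times N$ with $M\times sN$ up to the scaling factors in \eqref{eq:est}, which multiply all volumes and perimeters by fixed powers of $s$ and hence preserve the property of being isoperimetric of volume fraction $\be$, so setting $s_0=t_0^{-1}$ transports the statement verbatim.
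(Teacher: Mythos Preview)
Your setup through Propositions~\ref{prp:pr} and~\ref{prp:prpc} matches the paper. The divergence is in the final analytic step, and there is a genuine gap in your argument.

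You attempt a \emph{first-order} argument: since $E_i$ is isoperimetric in $t_iM\times N$, its graph function $f_i$ is a volume-constrained critical point of $\pp_{g_-^{t_i}}$, hence lies in $\LLL(t_i)\subset\La$ by Lemma~\ref{lem:Υ}. This requires both (a) that $\partial E_i$ is a $C^{2,\alpha}$ graph over $\Sg_{t_i}$ with $\|f_i\|_{C^{2,\alpha}(\Sg_{t_i})}$ small, and (b) that the Lyapunov--Schmidt neighbourhood $U(t_i)$ has radius bounded below as $t_i\to 0$. Neither is justified. For (a), the Hausdorff convergence of Proposition~\ref{prp:prpc} is in the \emph{fixed} metric $g_+^{t_0}$, whereas $E_i$ is a minimizer only for the \emph{varying} metric; invoking ``standard regularity'' to upgrade to $C^{2,\alpha}$ closeness in the degenerating metrics $g_-^{t_i}$ is not automatic. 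For (b), your spectral observation bounds the $L^2$ gap of $J_{\Sg_{t_i}}$ on $K^\perp$, but the inverse function theorem in Lemma~\ref{lem:Υ} is between $C^{2,\alpha}$ and $C^{0,\alpha}$, so you need uniform Schauder estimates on $\Sg_{t_i}=t_i\esf\times\Sg$; since the sectional curvature of $t_i\esf$ is of order $t_i^{-2}$, the relevant constants are not obviously uniform. You correctly flag this as ``the main obstacle'', but the resolution you offer does not close it.

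The paper avoids both difficulties by working at a \emph{single fixed} small $t$ and never using criticality of $E_i$. It introduces auxiliary sets $G_i$ minimizing $\pp_{g_-^t}$ among competitors with boundary in the shrinking tube $[\partial E]_{\rho_i}$ and volume $\vv_{g_-^t}(E)$; these are minimizers in a fixed metric, so \cite[Theorem~4.3]{fus} yields $C^{2,\alpha}$ convergence $\partial G_i\to\partial E$ and the graph functions $h_i$ enter the fixed neighbourhood $U$. The key step is then a \emph{second-order} argument: decomposing $h_i=h_i^{\LLL}+h_i^\perp$, stability of $\Sg_t$ plus Taylor expansion give $\pp(h_i)-\pp(h_i^{\LLL})\ge\tfrac{c}{4}\|h_i^\perp\|_{W^{1,2}}^2$; since $h_i^{\LLL}\in\La$ bounds a cylinderoid of the correct volume and $\pi_2(E)$ is isoperimetric, $\pp(h_i^{\LLL})\ge\pp(0)$; but $\pp(h_i)=\pp_{g_-^t}(G_i)\le\pp_{g_-^t}(E)=\pp(0)$, forcing $h_i^\perp=0$. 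This uses only that $G_i$ has perimeter $\le\pp(E)$, not that it is critical---essential because $G_i$ could be a tube-constrained minimizer. Once $G_i$ is a cylinderoid, the perimeter sandwich $\pp_{g_-^t}(G_i)\le\pp_{g_-^t}(E_i)\le\pp_{g_-^t}(E)=\pp_{g_-^t}(G_i)$ (using \eqref{eq:ane00}) shows $E_i$ is itself a tube minimizer, and the same conclusion applies to it.
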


\begin{proof}
According to Proposition~\ref{prp:reduce}, it suffices to give the proof when the first factor is a round sphere~$\esf$.
Let $E_i, E$ be as in Proposition~\ref{prp:pr}. 

After an isotropic homothety, \eqref{eq:ane00}, yields
\begin{equation}
\label{eq:proof000}
\vv_{g_-^{t}}(E_i)=\vv_{g_-^{t}}(E), 
\qquad 
\pp_{g_-^{t}}(E_i)\le\pp_{g_-^{t}}(E).
\end{equation}
Moreover, by Proposition \ref{prp:pr}, $E$ is cylinderoid and $\pi_2(E)$ is an isoperimetric set in $N$, of volume fraction $\be$. Consequently the hypothesis implies that $\ptl E$ is $C^{2,\alpha}$ smooth.
Shrinking $t$ if needed so that
\[
\ptl E:=\Sg_t=t\,\esf\times \Sg
\]
satisfies Lemma~\ref{lem:Υ}, i.e.
\begin{equation}
\label{eq:proof00}
\LLL\subset \La.
\end{equation}

By Proposition~\ref{prp:prpc} we have
\begin{equation}
\label{eq:prooff}
\ptl E_i \subset [\ptl E]_{\rho_i},\quad \rho_i\downarrow 0.
\end{equation}
Define $G_i$ by
\begin{equation}
\label{eq:proof0}
\pp_{g_-^{t}}(G_i)=\min\{\pp_{g_-^{t}}(F)\mid \ptl F\subset [\ptl E]_{\rho_i},\ \vv_{g_-^{t}}(F)=\vv_{g_-^{t}}(E)\}.
\end{equation}
Since 
\begin{equation}
\label{eq:prooffff}
\pp_{g_-^{t}}(G_i)\le\pp_{g_-^{t}}(E),
\end{equation}
and $E$ is a cylinderoid, then, due to \eqref{eq:proof000}, it suffices to prove that $G_i$ are cylinderoids, for $i\in\nn$, large enough.
By \cite[Theorem 4.3]{fus}, $\ptl G_i$ are $C^{2,\alpha}$ and converge to $\ptl E$ in the $C^{2,\alpha}$ topology. Let $h_i$ be the corresponding function of the graph  of $\ptl G_i$ around $\Sg_t$. Note that $h_i\in U$, for large  $i\in\nn$, where $U$ is as in Lemma \ref{lem:Υ}. 
Decompose
\begin{equation}
\label{eq:proof1}
h_i^\perp:=h_i-h_i^{\LLL}\in K(\Sg_t)^\perp.
\end{equation}
Due to Lemma \ref{lem:Υ}
\begin{equation}
\label{eq:proof2}
D\pp(h_i^{\LLL})[h_i^\perp]=0,
\end{equation}
and by stability
\begin{equation}
\label{eq:proof3}
D^2\pp(0)[h_i^\perp,h_i^\perp]\ge c \|h_i^\perp\|^2_{W^{1,2}}.
\end{equation}
By the continuity of the Hessian, we have, for sufficiently large $i\in\nn$,
\begin{equation}
\label{eq:proof4}
D^2\pp(h_i^{\LLL})[h_i^\perp,h_i^\perp]\ge \tfrac{c}{2} \|h_i^\perp\|^2_{W^{1,2}}.
\end{equation}
Taylor expansion yields
\begin{equation}
\label{eq:proof5}
\pp(h_i)-\pp(h_i^{\LLL})
= D\pp(h_i^{\LLL})[h_i^\perp] + D^2\pp(h_i^{\LLL})[h_i^\perp,h_i^\perp] + o(\|h_i^\perp\|^2_{W^{1,2}}).
\end{equation}
Combining \eqref{eq:proof3}–\eqref{eq:proof5}, for $i\in\nn$ large, we obtain
\begin{equation}
\label{eq:proof6}
\pp(h_i)-\pp(h_i^{\LLL})
\ge \tfrac{c}{4}\|h_i^\perp\|^2_{W^{1,2}}.
\end{equation}
Splitting,
\begin{equation}
\label{eq:proof7}
\pp(h_i)-\pp(0)
= \big(\pp(h_i)-\pp(h_i^{\LLL})\big) + \big(\pp(h_i^{\LLL})-\pp(0)\big).
\end{equation}
Owing to \eqref{eq:proof00}, each $h_i^{\LLL}$ corresponds to the boundary of a cylinderoid enclosing the same volume as $E$. Thus, due to (ii) of Proposition \ref{prp:pr},
\begin{equation}
\label{eq:proof8}
\pp(h_i^{\LLL})\ge\pp(0).
\end{equation}
Together with \eqref{eq:prooffff}, \eqref{eq:proof5}–\eqref{eq:proof8}, this yields
\begin{equation}
\label{eq:proof9}
h_i^\perp=0.
\end{equation}
Hence $\ptl G_i$ are cylinderoids and, consequently, the sets $G_i$ themselves are cylinderoids.
\end{proof}

\section{Appendix}
In this appendix we recall the fibrewise rearrangement used throughout the paper.
We consider here a symmetrization first introduced by Ros~\cite{ros} and later generalized by Morgan~\cite{morpol} for warped product manifolds with density.
Let $M,N$ be compact Riemannian manifolds. Let $\esf$ be a round sphere with the same volume and dimension as $M$ and with isoperimetric profile not exceeding that of $M$. Fix a point $p_0\in \esf$ and consider the foliation of the geodesic balls centered at $p_0$.
Let  $E\subset M\times N$ be a set of finite perimeter. Replace each non-empty horizontal slice with the unique geodesic ball in the foliation centred at $p_0$ having the same volume. We denote by $\sym E \subset \esf\times N$ this set, which we call the symmetrized set of~$E$. Then, there holds
\begin{equation}
\label{eq:sym1}
\vv_{\esf\times N}(\sym E)=\vv_{M\times N} (E)\quad \text{and}\ \quad \pp_{\esf\times N}(\sym E)\le \pp_{M\times N}(E).
\end{equation}
Consequently,
\begin{equation}
\label{eq:sym2}
I_{M\times N}\ge I_{\esf\times N}
\end{equation}

A Riemannian manifold $M$ endowed with constant density $a$ will be denoted by $M_a$. Then, by definition, the following equalities hold for every finite perimeter set $E\subset M$.
\begin{equation}
\label{eq:den}
\vv_{M_a}(E)=a\,\vv_{M}(E)\quad \text{and} \quad \pp_{M_a}(E)=a\,\pp_{M}(E)
\end{equation}

The following proposition, inspired by \cite[Theorem 3.12.]{morpol}, reduces the proof of the main theorem from the general case $M\times N$ to the particular case $\esf\times N$, and constitutes a fundamental ingredient of the article.

\begin{proposition}
\label{prp:reduce}
If Theorem \ref{thm:main} is true when the first factor is a round sphere, then it holds in general, i.e, when the first factor is any compact Riemannian manifold.
\end{proposition}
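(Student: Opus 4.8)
The plan is to transfer the conclusion of Theorem~\ref{thm:main} from $\esf\times N$ back to $M\times N$ by exploiting that symmetrization is volume-preserving and perimeter-nonincreasing, together with the equality case in the anisotropic scaling Lemma~\ref{lem:anest}\,(iii) and the equality case \eqref{eq:anest2}. First I would fix $\be\in(0,1)$ and a volume $v=\be\,\vv(tM\times N)$, and let $E\subset tM\times N$ be an isoperimetric region of volume $v$. Applying the fibrewise Ros--Morgan symmetrization of the Appendix to the first factor, one obtains $\sym E\subset t\esf\times N$ with $\vv(\sym E)=\vv(E)$ and $\pp(\sym E)\le\pp(E)$, where $\esf$ is the comparison round sphere with $\vv(\esf)=\vv(M)$, $\dim\esf=\dim M$, and $I_\esf\le I_M$. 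Since \eqref{eq:sym2} gives $I_{M\times N}\ge I_{\esf\times N}$ (and the scaling relations propagate this to the anisotropically scaled products), $\sym E$ is in fact an isoperimetric region in $t\esf\times N$ for the same volume fraction $\be$, provided $I_{tM\times N}=I_{t\esf\times N}$ at this volume; I would establish this last equality by noting that the cylinderoid competitor $M\times S$ (with $S$ isoperimetric in $N$) and its symmetrization $\esf\times S$ have equal perimeter, since on a cylinderoid the normal is tangent to the second factor and symmetrization leaves such slices essentially unchanged.

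Next, by the sphere case of Theorem~\ref{thm:main}, for $t\le t_0$ every isoperimetric region in $t\esf\times N$ of volume fraction $\be$ is a cylinderoid $\esf\times S$ with $S$ isoperimetric in $N$; in particular $\sym E=\esf\times S$. The crux is then to run the symmetrization inequality backwards: from $\pp_{M\times N}(E)\ge\pp_{\esf\times N}(\sym E)=\pp_{\esf\times N}(\esf\times S)=\vv(\esf)\,\pp_N(S)=\vv(M)\,\pp_N(S)=\pp_{M\times N}(M\times S)$, and from the fact that $M\times S$ is an admissible competitor of the same volume, we deduce $\pp(E)=\pp(M\times S)$, so $E$ is itself isoperimetric \emph{and} equality holds throughout the symmetrization. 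I would then invoke the equality characterization in the Ros--Morgan symmetrization (or, equivalently, argue via Lemma~\ref{lem:anest}\,(iii) after the anisotropic rescaling): equality in $\pp(\sym E)\le\pp(E)$ together with $\sym E$ being a cylinderoid forces $E$ to have almost every horizontal slice equal to a full copy of $M$ or empty, i.e.\ $E=M\times\pi_2(E)$ up to null sets, with $\pi_2(E)$ having the same slice-volume profile as $S$ and hence isoperimetric in $N$.

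The main obstacle I anticipate is precisely this rigidity step: reading off from equality in the symmetrization that the original set $E$ is a cylinderoid. The symmetrization of the Appendix as stated only records the inequality \eqref{eq:sym1}, not its equality case, and the equality case for fibrewise symmetrization is genuinely delicate when $I_\esf<I_M$ strictly, because then a generic (non-cylinderoidal) slice could in principle lose perimeter under symmetrization while the total is still forced to match — one must use that the deficit $I_M-I_\esf$ is strictly positive on every slice volume in $(0,\vv(M))$ to conclude that no such slice can occur in $E$. I would handle this by combining the slice coarea decomposition (as in the proof of Proposition~\ref{prp:pr}, equations \eqref{eq:con4}--\eqref{eq:con7}) with the strict inequality $I_M>I_\esf$ away from the endpoints: any set of slices with volumes in a compact subinterval of $(0,\vv(M))$ contributes strictly more perimeter than its symmetrization, so equality forces that set of slices to be null, which is exactly the cylinderoid condition. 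A secondary, more routine point is checking that all the scaling identities \eqref{eq:est}, \eqref{eq:anest1}, \eqref{eq:anest2} interact correctly with symmetrization so that ``isoperimetric in $t\esf\times N$'' and ``isoperimetric in $\esf\times tN$'' are interchangeable via isotropic rescaling, exactly as in Theorem~\ref{thm:main}\,(ii); this I would dispatch by the same $s_i=t_0 t_i^{-1}$ bookkeeping used in \eqref{eq:conb}.
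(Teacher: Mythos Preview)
Your proposal has a genuine gap at the very first step: you assume without argument that there is a round sphere $\esf$ with $\dim\esf=\dim M$, $\vv(\esf)=\vv(M)$, and $I_\esf\le I_M$ (indeed $I_\esf<I_M$ on $(0,\vv(M))$, which you later invoke for rigidity). For a general compact $M$ this is simply false---think of a long thin flat torus, whose profile near half-volume is far below that of the equal-volume round sphere. Without this inequality the symmetrization estimate \eqref{eq:sym1} is unavailable, and everything downstream collapses.

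The paper's remedy is to replace $M$ by the \emph{weighted} homothetic copy $(\tau M)_\alpha$ with $\tau=\alpha^{-1/m}$. This keeps the total volume fixed, $\vv((\tau M)_\alpha)=\vv(M)=\vv(\esf)$, while the relative isoperimetric inequality \eqref{eq:Ine2} and the Euclidean upper bound $I_\esf(v)\le c_m v^{(m-1)/m}$ allow one to choose $\alpha$ large enough that $I_{(\tau M)_\alpha}>I_\esf$ strictly on $(0,\vv(M))$. Since $(\tau M)_\alpha\times N=(\tau M\times N)_\alpha$, the isoperimetric problem there is equivalent to that in $\tau M\times N$, and now the Ros--Morgan symmetrization is legitimately applicable. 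This density-plus-homothety construction is the main content of the proof and is entirely absent from your outline.

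A secondary remark: the rigidity step you flag as the ``main obstacle'' is in fact a non-issue, and the paper bypasses it. Once $\sym E$ is isoperimetric in $\esf\times N$ and hence (by the sphere case) a cylinderoid, no equality characterisation of symmetrization is needed to conclude that $E$ is a cylinderoid: symmetrization preserves the volume of every horizontal slice, so $\sym E$ being a cylinderoid forces $\vv_{g_1}((E)_y)\in\{0,\vv(M)\}$ for a.e.\ $y$, which is exactly $E=M\times\pi_2(E)$ up to null sets. The paper phrases this contrapositively (``$E$ not a cylinderoid $\Rightarrow$ $\sym E$ not a cylinderoid'', which then contradicts the sphere case), but it is the same observation. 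Your detour through coarea and the strict gap $I_M-I_\esf>0$ is unnecessary.
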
 

\begin{proof}
We have
\begin{equation}
\label{eq:reduce1}
I_M(v)\ge \ga_2v^{(m-1)/m},\quad v\in [0,\vv(M)/2].
\end{equation}
Where $\ga_2=\ga_2 (M,\vv(M)/2)$ is the constant from \eqref{eq:Ine2} and $m\in\nn$ is the dimension of $M$.
Let $\esf$ be the $m$-dimensional round sphere so that $\vv(\esf)=\vv(M)$.

It is well known that the isoperimetric profile of a round sphere is below that of the Euclidean space of the same dimension, namely
\begin{equation}
\label{eq:reduce2}
I_{\esf}(v)\le c_mv^{(m-1)/m},
\end{equation}
where $c_m$ is the isoperimetric constant of the $m$-dimensional Euclidean space.
Let $\alpha>0$ be large enough such that
\begin{equation}
\label{eq:reduce3}
\alpha^{1/m} \ga_2>c_m
\end{equation}
and 
\begin{equation}
\label{eq:defatau}
\tau=\alpha^{-1/m}
\end{equation}
By the definition of $\tau$, there holds
\begin{equation}
\label{eq:reduce4}
\quad \vv((\tau M)_{\alpha})=\vv(M)=\vv(\esf). 
\end{equation}

Let $v\in (0,\vv(M)/2)$. According to \eqref{eq:est},  inequality \eqref{eq:reduce1} is invariant under homotheties. Thus, due to \eqref{eq:den}, \eqref{eq:reduce1} and \eqref{eq:reduce3}, we get
\begin{equation}
\label{eq:reduce5}
I_{(\tau M)_{\alpha}}(v)=\alpha I_{\tau M}\Big( \frac{v}{\alpha}\Big)\ge\alpha^{1/m} \ga_2v^{(m-1)/m}> c_m v^{(m-1)/m}\ge I_{\esf}( v).
\end{equation}
Since $v\in (0,\vv(M)/2)$ was arbitrary and the isoperimetric profile is symmetric with respect to the middle volume, 
then the previous inequality holds in $\big(0,\vv(\esf)\big)$. Namely
\begin{equation}
\label{eq:reduce6}
I_{(\tau M)_{\alpha}}>I_{\esf},\quad\text{in}\quad \big(0,\vv(M)\big).
\end{equation}
Thus, due to \eqref{eq:sym1} and \eqref{eq:reduce4}
\begin{equation}
\label{eq:reduce6a}
I_{(\tau M)_{\alpha}\times N}\ge I_{\esf\times N},\,\,\text{in}\,\, \big(0,\vv(\esf)\vv(N)\big).
\end{equation}

To complete the proof, we argue by contradiction. Assume that for some volume fraction $0<\beta<1$ in $\esf\times N$, all isoperimetric regions are cylinderoids, but there exists an isoperimetric region $E\subset \tau M\times N$, of the same volume fraction $\beta$, that is not. Since
\begin{equation}
\label{eq:reduc10}
(\tau M)_{\alpha}\times N=(\tau M\times N)_{\alpha}
\end{equation}
it follows that $E$ is an isoperimetric region in $(\tau M)_{\alpha}\times N$, say of volume $v_0$. Consequently, $\sym E$ is not a cylinderoid either.  Let $S\subset N$ be an isoperimetric region of volume $v_0\,\vv(M)^{-1}$. Then due to \eqref{eq:reduce6a} we get
\begin{equation}
\label{eq:reduc7}
\vv((\tau M)_{\alpha})\,\pp(S)\ge I_{(\tau M)_{\alpha}\times N}(v_0)\ge I_{\esf\times N}(v_0)=\pp_{\esf\times N}(\esf\times S)=\vv(\esf)\,\pp(S).
\end{equation}
Hence, by \eqref{eq:reduce4}, we obtain 
\begin{equation}
\label{eq:reduc8}
I_{(\tau M)_{\alpha}\times N}(v_0)= I_{\esf\times N}(v_0).
\end{equation}
Thus
\begin{equation}
\label{eq:reduc9}
\pp_{\esf\times N}(\sym E )\le\pp_{(\tau M)_{\alpha}\times N} (E)=I_{(\tau M)_{\alpha}\times N}(v_0)= I_{\esf\times N}(v_0).
\end{equation}
This is a contradiction, since $\sym E \subset \esf\times N$ 
would be an isoperimetric region of volume~$v_0$ that is not a cylinderoid.

\end{proof}

\bibliography{product}

\end{document}